\newtheorem{theorem}{Theorem}[section]
\newtheorem{proposition}{Proposition}[section]
\theoremstyle{definition}
\newtheorem{remark}{Remark}[section]
\numberwithin{equation}{section}
\begin{document}

\title[Periodic solutions to a perturbed relativistic Kepler problem]{Periodic solutions to a \\perturbed relativistic Kepler problem}

\author[A.~Boscaggin]{Alberto Boscaggin}

\address{
Department of Mathematics ``Giuseppe Peano'', University of Torino\\
Via Carlo Alberto 10, 10123 Torino, Italy}

\email{alberto.boscaggin@unito.it}

\author[W.~Dambrosio]{Walter Dambrosio}

\address{
Department of Mathematics ``Giuseppe Peano'', University of Torino\\
Via Carlo Alberto 10, 10123 Torino, Italy}

\email{walter.dambrosio@unito.it}

\author[G.~Feltrin]{Guglielmo Feltrin}

\address{
Department of Mathematics, Computer Science and Physics, University of Udine\\
Via delle Scienze 206, 33100 Udine, Italy}

\email{guglielmo.feltrin@uniud.it}

\thanks{Work written under the auspices of the Grup\-po Na\-zio\-na\-le per l'Anali\-si Ma\-te\-ma\-ti\-ca, la Pro\-ba\-bi\-li\-t\`{a} e le lo\-ro Appli\-ca\-zio\-ni (GNAMPA) of the Isti\-tu\-to Na\-zio\-na\-le di Al\-ta Ma\-te\-ma\-ti\-ca (INdAM). The third author is grateful to the other authors and to the University of Torino for the kind hospitality during the period in which the work was done.
\\
\textbf{Preprint -- March 2020}} 

\subjclass{34C25, 70H0B, 70H12, 83A05.}

\keywords{Relativistic Kepler problem, periodic solutions, invariant tori, nearly integrable Hamiltonian systems, action-angle coordinates.}

\date{}

\dedicatory{}

\begin{abstract}
We consider a perturbed relativistic Kepler problem
\begin{equation*}
\dfrac{\mathrm{d}}{\mathrm{d}t}\left(\dfrac{m\dot{x}}{\sqrt{1-|\dot{x}|^2/c^2}}\right)=-\alpha\, \dfrac{x}{|x|^3}+\varepsilon \, \nabla_x U(t,x), \qquad x \in \mathbb{R}^2 \setminus \{0\},
\end{equation*}
where $m, \alpha > 0$, $c$ is the speed of light and $U(t,x)$ is a function $T$-periodic in the first variable.
For $\varepsilon > 0$ sufficiently small, we prove the existence of $T$-periodic solutions with prescribed winding number, bifurcating from invariant tori of the unperturbed problem. 
\end{abstract}

\maketitle

\section{Introduction}\label{section-1}

The motion of a relativistic particle in a Kepler potential can be described by the equation
\begin{equation}\label{eq-0}
\dfrac{\mathrm{d}}{\mathrm{d}t}\left(\dfrac{m\dot{x}}{\sqrt{1-|\dot{x}|^2/c^2}}\right)=-\alpha\, \dfrac{x}{|x|^3}, \qquad x \in \mathbb{R}^2 \setminus \{0\},
\end{equation}
where the symbol $|\cdot|$ stands for the Euclidean norm of a two-dimensional vector, 
$m$ is the mass of the particle, $c$ is the speed of light and $\alpha$ is a constant ($m, \alpha > 0$).
Such an equation is well known in the physics community (see, for instance, \cite{AnBa-71,Bo-04,MuPa-06} and the references therein);
on the contrary, it has received much less attention by mathematicians working in the areas of Dynamical Systems and Nonlinear Analysis.
In particular, the dynamics of time-dependent perturbations of \eqref{eq-0} seems to be a quite unexplored topic and, to the best of our knowledge, only very few recent contributions can be quoted on this line of research \cite{Ga-19,ToUrZa-13,Za-13}.

In this paper, we deal with a time-periodic perturbation of the relativistic Kepler problem \eqref{eq-0}, precisely, 
\begin{equation}\label{eq-completa}
\dfrac{\mathrm{d}}{\mathrm{d}t}\left(\dfrac{m\dot{x}}{\sqrt{1-|\dot{x}|^2/c^2}}\right)=-\alpha\, \dfrac{x}{|x|^3}+\varepsilon \, \nabla_x U(t,x), \qquad x \in \mathbb{R}^2 \setminus \{0\},
\end{equation}
where $U \colon \mathbb{R} \times (\mathbb{R}^2 \setminus \{0\}) \to \mathbb{R}$ is a sufficiently regular function, $T$-periodic in the first variable for some $T>0$, and $\varepsilon > 0$ is a small parameter. In this context, a natural issue is the existence of $T$-periodic solutions of problem \eqref{eq-completa}, 
lying near the $T$-periodic solutions, if any, of the unperturbed problem ($\varepsilon = 0$).
Nearly circular solutions of \eqref{eq-completa} were recently provided in \cite[Corollary 9]{Ga-19}. 
As for solutions with a more complicated behavior, in this paper we prove, as a corollary of the main result
(Theorem~\ref{th-main}), the following theorem. 

\begin{theorem}\label{th-intro}
Let $U = U(t,x)\colon \mathbb{R} \times (\mathbb{R}^2 \setminus \{0\}) \to \mathbb{R}$ be a continuous function, continuously differentiable 
in the $x$-variable and $T$-periodic in the $t$-variable, for some
\begin{equation}\label{bound-T}
T > \dfrac{2\pi \alpha}{mc^3}.
\end{equation}
Then, for every sufficiently large integer $k \geq 2$, equation \eqref{eq-completa} has at least three $T$-periodic solutions with winding number equal to $k$ and three $T$-periodic solutions with winding number equal to $-k$, whenever $\varepsilon$ is sufficiently small. 
\end{theorem}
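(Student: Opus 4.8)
The plan is to deduce Theorem~\ref{th-intro} from the main result Theorem~\ref{th-main} by exhibiting, for each sufficiently large $k$, a resonant invariant torus of the unperturbed problem \eqref{eq-0} whose orbits are $T$-periodic with winding number $k$ and which meets the non-degeneracy (twist) hypotheses of Theorem~\ref{th-main}. Granting this, the main theorem furnishes three $T$-periodic solutions of \eqref{eq-completa} bifurcating from that torus for $\varepsilon$ small — the count three reflecting the Lusternik--Schnirelmann category of the two-dimensional invariant torus, which the unperturbed time-$T$ map fixes pointwise — and the reflection symmetry $\theta\mapsto-\theta$ (equivalently, replacing the angular momentum $\ell$ by $-\ell$) produces the corresponding three solutions of winding number $-k$. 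So the whole matter reduces to a study of the unperturbed relativistic Kepler flow.

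First I would pass to polar coordinates and exploit the two first integrals of \eqref{eq-0}, the energy $E = mc^2\gamma - \alpha/|x|$ and the angular momentum $\ell = m|x|^2\dot\theta\,\gamma$ (with $\gamma = (1-|\dot x|^2/c^2)^{-1/2}$), to reduce a bounded orbit to a radial oscillation between turning points $r_-(E,\ell)\le r_+(E,\ell)$. Setting $u=1/r$, these are the roots of $(\alpha^2-c^2\ell^2)u^2+2E\alpha u+(E^2-m^2c^4)=0$, from which one reads off that bounded non-collision orbits exist exactly for $\ell>\alpha/c$ and $mc^2\sqrt{1-\alpha^2/(\ell^2c^2)}<E<mc^2$. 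On such a torus I would introduce the radial frequency $\omega_1=2\pi/T_r$, with $T_r=2\int_{r_-}^{r_+}\mathrm{d}r/|\dot r|$, and the mean angular frequency $\omega_2=\langle\dot\theta\rangle=\Delta\theta/T_r$, where the apsidal angle is the classical expression $\Delta\theta=2\pi\big(1-\alpha^2/(\ell^2c^2)\big)^{-1/2}$. An orbit on this torus is $T$-periodic with winding number $k$ precisely when $\omega_1T=2\pi$ and $\omega_2T=2\pi k$ (the choice $n_1=1$), i.e. when $\Delta\theta=2\pi k$ and $T_r=T$.

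The two resonance equations decouple because $\Delta\theta$ depends on $\ell$ alone: $\Delta\theta=2\pi k$ fixes $\ell=\ell_k:=\alpha k/(c\sqrt{k^2-1})$, and it remains to choose $E$ so that $T_r(E,\ell_k)=T$. Here I would analyse the radial period as $E$ runs through $(mc^2/k,mc^2)$. As $E\downarrow mc^2/k$ the orbit degenerates to the circular one, whose radial period I would compute to be $T_r^{\mathrm{circ}}(\ell_k)=2\pi\alpha k^3/\big(mc^3(k^2-1)^{3/2}\big)$, decreasing to $2\pi\alpha/(mc^3)$ as $k\to\infty$; as $E\uparrow mc^2$ the outer turning point escapes to infinity and $T_r\to+\infty$. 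Hence $T_r(\cdot,\ell_k)$ attains every value in $(T_r^{\mathrm{circ}}(\ell_k),+\infty)$, and since the standing assumption \eqref{bound-T} gives $T>2\pi\alpha/(mc^3)=\lim_k T_r^{\mathrm{circ}}(\ell_k)$, for all sufficiently large $k$ one has $T>T_r^{\mathrm{circ}}(\ell_k)$ and may solve $T_r(E_k,\ell_k)=T$ for an admissible $E_k$. This is exactly the role of hypothesis \eqref{bound-T} and explains the restriction to large $k$.

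The main obstacle is the verification of the non-degeneracy condition required by Theorem~\ref{th-main} at the tori $(E_k,\ell_k)$, together with uniform control of the period integrals in the ultrarelativistic regime $\ell_k\downarrow\alpha/c$ (where $r_c\sim\alpha/(mc^2k)\to0$ and $\gamma_c=k\to\infty$). Because $\Delta\theta$ depends only on $\ell$, a short computation reduces the Jacobian of the frequency map $(E,\ell)\mapsto(\omega_1,\omega_2)$ to $g'(\ell)\,\omega_1\,\partial_E\omega_1$, with $g(\ell)=\big(1-\alpha^2/(\ell^2c^2)\big)^{-1/2}$; since $g'\neq0$, the twist condition amounts to the single requirement $\partial_E T_r\neq0$ at $(E_k,\ell_k)$, i.e. that the radial period genuinely varies with energy. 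Establishing this — through a careful expansion of $T_r$ and of its energy derivative, uniformly for large $k$ — is the crux of the argument; once it is in place, Theorem~\ref{th-main} applies and yields the three solutions of winding number $\pm k$, for every sufficiently large $k$ and every small $\varepsilon>0$.
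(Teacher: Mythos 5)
Your strategy coincides with the paper's: find resonant invariant tori of the unperturbed flow via the two first integrals, impose the resonances $T_r=T$ and $\Delta\theta=2\pi k$ (your $\ell_k=\alpha k/(c\sqrt{k^2-1})$ is exactly the paper's $L_{1,k}$, and your admissible range $E\in(mc^2/k,mc^2)$ matches \eqref{eq-necperiodiche}), verify a twist condition, and invoke the higher-dimensional Poincar\'e--Birkhoff result of \cite{FoGaGi-16} to obtain $N+1=3$ solutions per torus, treating the torus with angular momentum $-\ell_k$ separately for winding number $-k$. (Strictly speaking, what you are invoking is the abstract Theorem~\ref{fgg}, not Theorem~\ref{th-main}: the latter has no non-degeneracy hypothesis left to check, and the paper's deduction of Theorem~\ref{th-intro} from it is literally ``take $n=1$''; your proposal amounts to re-proving the $n=1$ case of Theorem~\ref{th-main}.) One genuinely nice point: your reduction of $\det\nabla^2\mathcal{K}\neq 0$, via the chain rule through the variables $(E,\ell)$ and the factorization $\omega_2=g(\ell)\,\omega_1$, to the two scalar conditions $g'(\ell)\neq 0$ and $\partial_E\omega_1\neq 0$ is correct and tidier than the paper's explicit computation of the Hessian of \eqref{eq-hamactang}.

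However, there is a genuine gap precisely where you locate ``the crux'': the condition $\partial_E T_r\neq 0$ at $(E_k,\ell_k)$ is never established, and neither are the properties of $T_r$ your intermediate-value argument for the existence of $E_k$ rests on (the circular-orbit limit value, the divergence as $E\uparrow mc^2$, continuity). What closes all of this in the paper is Proposition~\ref{teo-periodor}: an explicit evaluation of the period integral gives $T_r=T_h=2\pi\alpha m^2c^3\,(m^2c^4-h^2)^{-3/2}$, a function of the energy \emph{alone} --- a special feature of the relativistic Kepler problem your proposal does not notice --- which is manifestly strictly increasing in $h$ and diverges as $h\uparrow mc^2$. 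With this formula both the existence of $E_k$ (indeed $E_k=h_{T,1}$ is the same for all $k$) and the twist condition $\partial_E T_r>0$ are immediate; in particular, no ``careful expansion uniformly for large $k$'' is needed, and uniformity in $k$ is irrelevant anyway since $\varepsilon^*$ may depend on $k$. Two smaller points: (a) for winding number $-k$ you cannot transport the \emph{perturbed} solutions by the reflection $\vartheta\mapsto-\vartheta$, since $U(t,x)$ need not be reflection-symmetric; as in the paper, one must re-run the bifurcation argument on the unperturbed torus with angular momentum $-\ell_k$ (presumably what your parenthetical intends); (b) one must still argue that the six solutions are pairwise distinct and that their winding numbers are indeed $\pm k$ after perturbation, which the paper does through the closeness estimate \eqref{vicinotoro}, the resulting $\mathcal{C}^0$ bound \eqref{vicino1}, and the continuity of the winding number in the $\mathcal{C}^0$ topology.
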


Notice that, as a consequence, equation \eqref{eq-completa} has as many $T$-periodic solutions as we wish, provided that $\varepsilon$ is small enough. On growing of the period $T$, even more $T$-periodic solutions could be obtained: we refer to the main result of the paper 
for the precise statement.

As it will be clear from the proof, \eqref{bound-T} is a necessary and sufficient condition for the existence of non-circular $T$-periodic solutions of the unperturbed problem \eqref{eq-0}. Let us notice that, whenever one of such solutions - say $x^*(t)$ - is found, a whole (two-parameter) family of $T$-periodic solutions actually exists: indeed, the functions $e^{i \omega}x^*(t + \tau)$, for
$\omega \in [0,2\pi)$ and $\tau \in [0,T)$, are also $T$-periodic solutions of \eqref{eq-0}.
In other terms, equation \eqref{eq-0} possesses an invariant two-dimensional torus filled by $T$-periodic solutions.

With this in mind, the strategy of the proof of Theorem~\ref{th-intro} can be easily illustrated.
First, we detect, for every positive real number $T$ satisfying \eqref{bound-T}, invariant tori filled by $T$-periodic solutions for the unperturbed equation: infinitely many of them exist, and they can be labelled by an integer $k$, giving the number of counter-clockwise rotations around the origin made in a period by the corresponding periodic solutions.
Second, we establish a bifurcation-type result from each of these invariant tori, ensuring the survival of a finite number of $T$-periodic solutions of the perturbed problem \eqref{eq-completa}, for $\varepsilon$ small enough. For this part of the proof, we rely on a recent result from \cite{FoGaGi-16}, providing - via an higher dimensional version of the Poincar\'e--Birkhoff fixed point theorem established in \cite{FoUr-17} - periodic solutions for nearly integrable Hamiltonian systems, whenever a suitable non-degeneracy condition 
for the unperturbed problem is satisfied. After finding action-angle coordinates for problem \eqref{eq-0}, we are thus led to check that such a non-degeneracy condition is satisfied, eventually concluding the proof of Theorem~\ref{th-intro}. We point out that, in general, the result is sharp: tori made by periodic solutions are typically destroyed by a perturbation, and no more than a finite number of periodic solutions can be found, for $\varepsilon \neq 0$. On the contrary, many quasi-periodic invariant tori will survive, by the methods of KAM theory 
(see \cite{Du-14} and Remark \ref{rem-kam}).

It is worth emphasizing that Theorem~\ref{th-intro} is in strong contrast with the corresponding results dealing with the perturbed Kepler problem 
\begin{equation}\label{eq-kep-intro}
m \ddot x =-\alpha\, \dfrac{x}{|x|^3}+\varepsilon\, \nabla_x U(t,x),
\end{equation}
arising by taking the non-relativistic limit $c \to +\infty$ in equation \eqref{eq-completa}. Indeed, for the Kepler problem the set of non-circular $T$-periodic solutions (that is, the set of Keplerian ellipses with positive eccentricity and fixed major semi-axis) is not a union of distinct two-dimensional tori, but rather a non-compact three-dimensional manifold, whose boundary includes circular solutions as well as collision-solutions. This is a consequence of the super-integrable character of the Kepler problem and eventually prevents the use of perturbation results as the one in \cite{FoGaGi-16}. Periodic solutions to \eqref{eq-kep-intro} are known to exist, for $\varepsilon \neq 0$, only in a generalized sense taking into account the possible occurrence of collisions \cite{BoDaPa-20,BoOrZh-19} or when the perturbation
term has some special structure (see \cite{AmCo-89,CaVi-00,FoGa-17,FoGa-18,SeTe-94} and the references therein).

\medskip

The plan of the paper is the following. In Section~\ref{section-2}, we deal with the unperturbed problem \eqref{eq-0},
with the final goals of detecting its periodic solutions, with explicit formulas for their minimal period, and finding action-angle coordinates.
In Section~\ref{section-3}, we first recall the bifurcation result from \cite{FoGaGi-16} and we then state and prove our main result.

\section{The unperturbed problem}\label{section-2}

In this section, we consider the unperturbed relativistic Kepler problem
\begin{equation}\label{eq-keplero}
\dfrac{\mathrm{d}}{\mathrm{d}t}\left(\dfrac{m\dot{x}}{\sqrt{1-|\dot{x}|^2/c^2}}\right)=-\alpha \, \dfrac{x}{|x|^3}, \qquad x \in \mathbb{R}^2 \setminus \{0\}.
\end{equation}
After writing it as a first order Hamiltonian system with two degrees of freedom (Section~\ref{subsec-hamilton}), 
we describe the set of its periodic solutions, providing an explicit formula for the minimal period (Section~\ref{subsec-periodiche} and Section~\ref{section-2.3}): in doing this, we partially follow and complete the analysis given in \cite{Bo-04,ToUrZa-13}. Lastly, we construct action-angle coordinates associated with \eqref{eq-keplero} (Section~\ref{section-2.4}).

\subsection{The Hamiltonian formulation}\label{subsec-hamilton}

In principle, equation \eqref{eq-keplero} appears as the Euler--Lagrange equation
\begin{equation*}
\dfrac{\mathrm{d}}{\mathrm{d}t} \frac{\partial E_0}{\partial\dot x} = \frac{\partial E_0}{\partial x}, 
\qquad \text{where }
E_0(x,\dot{x})=-mc^2\sqrt{1-\dfrac{|\dot{x}|^2}{c^2}}+\dfrac{\alpha}{|x|}.
\end{equation*}
For our purposes, however, it is more convenient to pass to a Hamiltonian formulation. This can be done (via Legendre transformation) defining the dual variable
\begin{equation}\label{eq-momentop}
p=\dfrac{m\dot{x}}{\sqrt{1-\dfrac{|\dot{x}|^2}{c^2}}}
\end{equation}
and the associated Hamiltonian
\begin{equation}\label{eq-hamiltoniana}
H_0(x,p)=mc^2\sqrt{1+\dfrac{|p|^2}{m^2c^2}}-\dfrac{\alpha}{|x|}.
\end{equation}
From now on, we thus consider the equivalent Hamiltonian system
\begin{equation*}
\dot x = \nabla_p H_0(x,p), \qquad
\dot p = - \nabla_x H_0(x,p),
\end{equation*}
which explicitly reads as
\begin{equation}\label{eq-hs}
\begin{cases}
\, \dot x = \dfrac{p}{m \sqrt{1 + \dfrac{| p |^2}{m^2c^2}}}, \vspace{7pt}\\
\, \dot p = - \alpha \, \dfrac{x}{|x|^3}.
\end{cases}
\end{equation}
Of course, the Hamiltonian $H_0$ is a first integral for \eqref{eq-hs}; moreover, it is plain to check that a second first integral is given by the angular momentum
\begin{equation*}
L_0(x,p) = \langle x, Jp \rangle, \qquad \text{where } J = \begin{pmatrix} 0 & 1 \\
-1 & 0\end{pmatrix}.
\end{equation*}

A first change of variables will be useful for investigating the dynamics of \eqref{eq-hs}. 
Precisely, we define
\begin{equation*}
\Omega=(0,+\infty)\times \mathbb{T}^1 \times \mathbb{R}^2
\end{equation*}
and the diffeomorphism
\begin{equation}\label{eq-cambiovar1}
\Psi \colon \Omega\to (\mathbb{R}^2\setminus\{0\})\times \mathbb{R}^2, \qquad (r,\vartheta,l,\Phi)\mapsto (x,p), 
\end{equation}
given by 
\begin{equation}\label{eq-cambiovar2}
x=re^{i\vartheta}, \qquad p=le^{i\vartheta}+\dfrac{\Phi}{r}ie^{i\vartheta}.
\end{equation}
Notice that 
\begin{equation*}
\langle x, Jp \rangle = \Phi,
\end{equation*}
showing that the variable $\Phi$ is nothing but the angular momentum. On the other hand, since
\begin{equation*}
\langle \dfrac{x}{|x|},p \rangle=l,
\end{equation*}
the variable $l$ will be called linear momentum.

This change of variables will be the first step in the construction of action-angles variables for the original problem. By now, we just observe that it is a symplectic map, meaning that
\begin{equation*}
\mathrm{d}x \wedge \mathrm{d}p = \mathrm{d}r \wedge \mathrm{d}l + \mathrm{d}\vartheta \wedge \mathrm{d}\Phi. 
\end{equation*}
As a consequence, system \eqref{eq-hs} is transformed into the Hamiltonian system
\begin{equation}\label{eq-hs2}
\left\{\begin{array}{lclcl}
\dot r &=& \partial_l \mathcal{H}_0(r,\vartheta,l,\Phi) &=& \dfrac{l}{m}\dfrac{1}{\sqrt{1+\dfrac{l^2+\Phi^2/r^2}{m^2c^2}}}, \vspace{7pt}\\
\dot l &=& - \partial_r \mathcal{H}_0(r,\vartheta,l,\Phi) &=& \dfrac{\Phi^{2}}{m r^{3}} \dfrac{1}{\sqrt{1+\dfrac{l^2+\Phi^2/r^2}{m^2c^2}}} - \dfrac{\alpha}{r^{2}}, \vspace{7pt}\\
\dot \vartheta &=& \partial_\Phi \mathcal{H}_0(r,\vartheta,l,\Phi) &=& \dfrac{\Phi}{mr^{2}}\dfrac{1}{\sqrt{1+\dfrac{l^2+\Phi^2/r^2}{m^2c^2}}}, \vspace{7pt}\\
\dot \Phi &=& -\partial_\vartheta \mathcal{H}_0(r,\vartheta,l,\Phi) &=& 0, \vspace{2pt}
\end{array}
\right.
\end{equation}
corresponding to the Hamiltonian
\begin{equation*}
\mathcal{H}_0(r,\vartheta,l,\Phi)=mc^2\sqrt{1+\dfrac{l^2+\Phi^2/r^2}{m^2c^2}}-\dfrac{\alpha}{r}.
\end{equation*}
As already seen, the angular momentum in the new variables writes as
\begin{equation*}
\mathcal{L}_0(r,\vartheta,l,\Phi)=\Phi.
\end{equation*}
Hence, every solution $(r,l,\vartheta,\Phi)=(r(t),l(t),\vartheta(t),\Phi(t))$ of \eqref{eq-hs2} satisfies
\begin{equation}\label{eq-traiettorie}
mc^2\sqrt{1+\dfrac{l(t)^2+L^2/(r(t))^2}{m^2c^2}}-\dfrac{\alpha}{r(t)}\equiv h,
\end{equation}
for some $h, L \in \mathbb{R}$.

\subsection{Dynamics in the $(r,l)$-plane} \label{subsec-periodiche}

In this section, we study the dynamics of the solutions of \eqref{eq-hs2} in the $(r,l)$-plane, with the aim of detecting values of energy $h$ and angular momentum $L \neq 0$ giving rise to closed orbits in such a plane.
Incidentally, let us notice that system \eqref{eq-hs2} cannot have periodic solutions with zero angular momentum (since $\dot l < 0$).

As already observed, for fixed values of the the energy $h\in \mathbb{R}$ and of the angular momentum $L\neq 0$, trajectories in the $(r,l)$-plane satisfy \eqref{eq-traiettorie}. Hence, we can obtain
\begin{equation}\label{eq-traiett2}
l^2=\phi_{h,L}(r),
\end{equation}
where
\begin{equation}\label{eq-funzphi}
\phi_{h,L}(r)=\dfrac{1}{c^2}\, \left(\dfrac{\alpha^2-L^2c^2}{r^2}+\dfrac{2\alpha h}{r}+h^2-m^2c^4\right), \quad r>0.
\end{equation}
The qualitative property of such a function of course changes on varying of the parameters $h,L$. 
We are going to look for situations where $\phi_{h,L}$ is positive between two consecutive zeros, both simple: clearly, this gives rise to a non-constant
closed orbit in the $(r,l)$-plane.

We first observe that
\begin{equation}\label{eq-zerophi}
\lim_{r\to 0^+} \phi_{h,L}(r)=
\begin{cases}
\,+\infty,&\text{if $\alpha^2-L^2c^2>0$, or if $\alpha^2-L^2c^2=0$, $h>0$,}\\
\,-\infty, &\text{if $\alpha^2-L^2c^2<0$, or if $\alpha^2-L^2c^2=0$, $h<0$,}\\
\, -m^2c^2, &\text{if $\alpha^2-L^2c^2=0$, $h=0$,}
\end{cases}
\end{equation}
and that
\begin{equation}\label{eq-infinitophi}
\lim_{r\to +\infty} \phi_{h,L}(r)=
\displaystyle \dfrac{h^2-m^2c^4}{c^2}.
\end{equation}
Moreover, it holds that
\begin{equation}\label{eq-derivataphi}
\phi'_{h,L}(r)=\dfrac{1}{c^2}\, \biggl{(}-\dfrac{2(\alpha^2-L^2c^2)}{r^3}-\dfrac{2\alpha h}{r^2}\biggr{)} 
=-\dfrac{2}{c^2r^3}\, (\alpha^2-L^2c^2+\alpha hr),
\end{equation}
for every $r > 0$.

We now distinguish several cases.

\smallskip
\noindent
\textit{Case~1: $\alpha^2-L^2c^2>0$.}
We consider two subcases:
\begin{itemize}
\item $h \geq 0$: in this case, from \eqref{eq-derivataphi} we immediately deduce that
$\phi'_{h,L}(r) < 0$ for every $r > 0$, so that there are no closed orbits.
\item $h < 0$: in this case, a simple computation shows that
$\phi'_{h,L}(r)< 0$ if and only if $r<r^{*}$,
where 
\begin{equation}\label{eq-rstarbasso}
r^{*}=\dfrac{\alpha^2-L^2c^2}{-\alpha h}>0.
\end{equation}
Hence, the function $\phi_{h,L}$ has a unique global minimum at $r = r^*$;
as a consequence, also in this case there are no closed orbits.
\end{itemize}

\smallskip
\noindent
\textit{Case~2: $\alpha^2-L^2c^2 = 0$.}
In this situation \eqref{eq-funzphi} reduces to
\begin{equation*}
\phi_{h,L}(r)=\dfrac{1}{c^2}\, \left(\dfrac{2\alpha h}{r}+h^2-m^2c^4\right),
\end{equation*}
so that the conclusion are straightforward. Precisely, we have:
\begin{itemize}
\item $h > 0$: in this case, $\phi_{h,L}$ is strictly decreasing and so there are no closed orbits.
\item $h = 0$: in this case, $\phi_{h,L}$ is a negative constant and no motion is possible.
\item $h < 0$: in this case, $\phi_{h,L}$ is strictly increasing and so there are no closed orbits.
\end{itemize}

\smallskip
\noindent
\textit{Case~3: $\alpha^2-L^2c^2 < 0$.} 
We consider two subcases:
\begin{itemize}
\item $h \leq 0$: in this case, from \eqref{eq-derivataphi} we immediately deduce that
$\phi'_{h,L}(r) > 0$ for every $r > 0$, so that there are no closed orbits.
\item $h > 0$: in this case, a simple computation shows that
$\phi'_{h,L}(r)> 0$ if and only if $r<r^{*}$ (with $r^*$ defined in \eqref{eq-rstarbasso}), so that
the function $\phi_{h,L}$ has a unique global maximum at $r = r^*$, 
with
\begin{equation*}
\phi_{h,L}(r^{*})=\dfrac{1}{c^2(L^2c^2-\alpha^2)} \bigl{(} \alpha^2 h^2+(h^2-m^2c^4)(L^2c^2-\alpha^2) \bigr{)}.
\end{equation*}
Recalling \eqref{eq-zerophi} and \eqref{eq-infinitophi}, we thus deduce that a first necessary condition for 
the existence of two simple zeros is that $h^2 - m^2 c^4 < 0$. At this point, the second condition
to be imposed is that $\phi_{h,L}(r^*) > 0$, giving rise to 
\begin{equation*}
L^2 < \dfrac{\alpha^2m^2c^2}{m^2c^4-h^2}.
\end{equation*}
\end{itemize}

Summarizing the above discussion, the orbits in the $(r,l)$-plane are closed and non-constant if and only if 
\begin{equation}\label{eq-necperiodiche}
0<h<mc^2,\quad \dfrac{\alpha^2}{c^2} < L^2 < \dfrac{\alpha^2m^2c^2}{m^2c^4-h^2}.
\end{equation}
(see Figure~\ref{fig-1}).
This means that the corresponding solution $x$ of \eqref{eq-keplero} is such that its modulus $r =  \vert x \vert$ is a non-constant periodic function. We point out that this does not imply that $x$ is a periodic solution: indeed, also the angular component 
$\vartheta$ plays a role (see Section~\ref{section-2.3}).
For completeness, we also observe that the case
\begin{equation*}
0<h<mc^2,\quad \dfrac{\alpha^2}{c^2} < L^2 = \dfrac{\alpha^2m^2c^2}{m^2c^4-h^2},
\end{equation*}
corresponds instead to the constant solution $(r^*,0)$ (that is, $x$ is a circular motion).

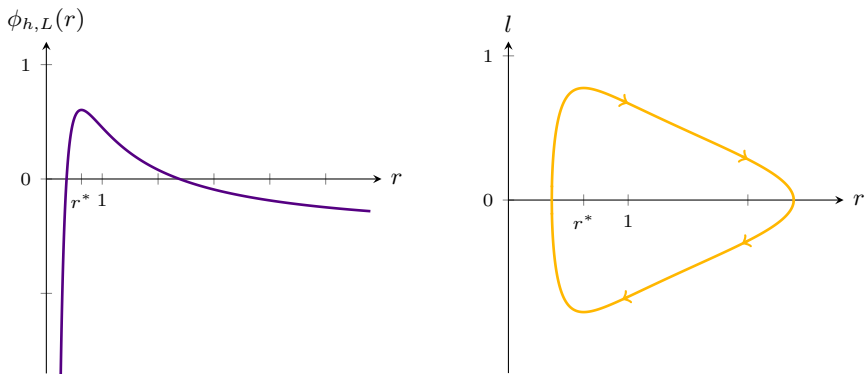
\begin{figure}[htb]
\definecolor{bdf-purple}{RGB}{85,0,130}
\definecolor{bdf-yellow}{RGB}{255,183,0}
\begin{tikzpicture}
\begin{axis}[
  tick label style={font=\scriptsize},
  axis y line=left, 
  axis x line=middle,
  xtick={0.628571,1,2,3,4,5},
  ytick={-1,0,1},
  xticklabels={$r^{*}$,$1$,,,,},
  yticklabels={,$0$,$1$},
  xlabel={\small $r$},
  ylabel={\small $\phi_{h,L}(r)$},
every axis x label/.style={
    at={(ticklabel* cs:1.0)},
    anchor=west,
},
every axis y label/.style={
    at={(ticklabel* cs:1.0)},
    anchor=south,
},
  width=6cm,
  height=6cm,
  xmin=0,
  xmax=6,
  ymin=-1.7,
  ymax=1.2]
\addplot[color=bdf-purple,line width=1pt,domain=0.1:5.8,smooth,samples=2000]{-0.51 - 0.44/x^2 + 1.4/x};
\end{axis}
\end{tikzpicture}
\quad\quad
\begin{tikzpicture}
\begin{axis}[
  tick label style={font=\scriptsize},
  axis y line=left, 
  axis x line=middle,
  xtick={0.628571,1,2},
  ytick={0,1},
  xticklabels={$r^{*}$,$1$,},
  yticklabels={$0$,$1$},
  xlabel={\small $r$},
  ylabel={\small $l$},
every axis x label/.style={
    at={(ticklabel* cs:1.0)},
    anchor=west,
},
every axis y label/.style={
    at={(ticklabel* cs:1.0)},
    anchor=south,
},
  width=6cm,
  height=6cm,
  xmin=0,
  xmax=2.8,
  ymin=-1.2,
  ymax=1.1]
\coordinate (A) at (axis cs:0.95,0.690036);
\coordinate (B) at (axis cs:1,0.67082);
\draw[->] [color=bdf-yellow,line width=1pt] (A)--(B);
\coordinate (C) at (axis cs:1.95,0.303703);
\coordinate (D) at (axis cs:2,0.282843);
\draw[->] [color=bdf-yellow,line width=1pt] (C)--(D);
\coordinate (E) at (axis cs:2,-0.282843);
\coordinate (F) at (axis cs:1.95,-0.303703);
\draw[->] [color=bdf-yellow,line width=1pt] (E)--(F);
\coordinate (G) at (axis cs:1,-0.67082);
\coordinate (H) at (axis cs:0.95,-0.690036);
\draw[->] [color=bdf-yellow,line width=1pt] (G)--(H);
\addplot +[mark=none,color=bdf-yellow,line width=1pt] coordinates {(0.362031, 0) (0.3635, 0.1)};
\addplot +[mark=none,color=bdf-yellow,line width=1pt] coordinates {(0.362031, 0) (0.3635, -0.1)};
\addplot +[mark=none,color=bdf-yellow,line width=1pt] coordinates {(2.38307, -0.01) (2.38307, 0.01)};
\addplot[color=bdf-yellow,line width=1pt,domain=0.36203:2.38307,smooth,samples=2000]{sqrt(-0.51 - 0.44/x^2 + 1.4/x)};
\addplot[color=bdf-yellow,line width=1pt,domain=0.36203:2.38307,smooth,samples=2000]{-sqrt(-0.51 - 0.44/x^2 + 1.4/x)};
\end{axis}
\end{tikzpicture}
\captionof{figure}{Graph of $\phi_{h,L}$ (on the left) and phase-portrait in the $(r,l)$-plane (on the right) with $\alpha=m=c=1$, and the energy $h$ and the angular momentum $L$ satisfying condition \eqref{eq-necperiodiche} (in the represented case $h=0.7$ and $L=1.2$).}
\label{fig-1}
\end{figure}

\medskip

In the following result we give an explicit formula for the minimal period of~$r$.

\begin{proposition}\label{teo-periodor} 
Let $x$ be a solution of \eqref{eq-keplero} with energy $h$ and angular momentum $L$.
If \eqref{eq-necperiodiche} holds, then the function $r=|x|$ is periodic, with minimal period given by
\begin{equation}\label{eq-periodor}
T_{h} = \dfrac{2\pi \alpha m^2 c^3}{(m^2c^4-h^2)^{\frac{3}{2}}}. 
\end{equation}
\end{proposition}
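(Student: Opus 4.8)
The plan is to compute $T_h$ by a direct quadrature in the $(r,l)$-plane. Under condition \eqref{eq-necperiodiche}, the function $\phi_{h,L}$ is positive between its two simple zeros $0 < r_- < r_+$, so the orbit $l^2 = \phi_{h,L}(r)$ is a simple closed curve: along it, $r$ decreases and increases monotonically between the turning points $r_\pm$ (where $l=0$), and its minimal period is twice the transit time from $r_-$ to $r_+$. The first step is therefore to express $\dot r$ in closed form. Using the energy relation \eqref{eq-traiettorie}, namely $mc^2\sqrt{1+(l^2+L^2/r^2)/(m^2c^2)} = h + \alpha/r$, I would eliminate the square root appearing in the first equation of \eqref{eq-hs2} to obtain
\begin{equation*}
\dot r = \frac{l}{m}\,\frac{mc^2}{h+\alpha/r} = \frac{c^2\,l\,r}{hr+\alpha}.
\end{equation*}

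Next, I introduce the quadratic $Q(r) := c^2 r^2\,\phi_{h,L}(r) = (h^2-m^2c^4)\,r^2 + 2\alpha h\,r + (\alpha^2-L^2c^2)$, whose positive roots are exactly $r_\pm$, so that $\lvert l\rvert = \sqrt{\phi_{h,L}(r)} = \sqrt{Q(r)}/(c\,r)$ along the orbit. Substituting into $T_h = 2\int_{r_-}^{r_+} \mathrm{d}r/\lvert\dot r\rvert$ and simplifying the factors of $r$, the period takes the form
\begin{equation*}
T_h = \frac{2}{c}\int_{r_-}^{r_+}\frac{hr+\alpha}{\sqrt{Q(r)}}\,\mathrm{d}r.
\end{equation*}

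The key computational idea is then to split the numerator $hr+\alpha$ as a linear combination of $Q'(r) = 2(h^2-m^2c^4)r + 2\alpha h$ and a constant. The part proportional to $Q'(r)$ integrates to $\sqrt{Q(r)}$ evaluated at the endpoints, which vanishes because $Q(r_\pm)=0$; a short computation shows that the residual constant coefficient equals $\alpha m^2 c^4/(m^2c^4-h^2)$. There remains only the standard integral $\int_{r_-}^{r_+}\mathrm{d}r/\sqrt{Q(r)}$, and since \eqref{eq-necperiodiche} forces the leading coefficient $h^2-m^2c^4$ to be negative, an arcsine substitution gives $\int_{r_-}^{r_+}\mathrm{d}r/\sqrt{Q(r)} = \pi/\sqrt{m^2c^4-h^2}$. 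Assembling these constants yields precisely
\begin{equation*}
T_h = \frac{2}{c}\cdot\frac{\alpha m^2 c^4}{m^2c^4-h^2}\cdot\frac{\pi}{\sqrt{m^2c^4-h^2}} = \frac{2\pi\alpha m^2 c^3}{(m^2c^4-h^2)^{\frac{3}{2}}},
\end{equation*}
as claimed in \eqref{eq-periodor}.

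I do not expect a serious obstacle here, as the argument is a textbook-style quadrature; the only points requiring care are the clean cancellation of the $Q'$-term at the turning points (which is what makes the period depend on $h$ but not on $L$), the correct evaluation of the leftover constant $\alpha m^2c^4/(m^2c^4-h^2)$, and the justification — already guaranteed by \eqref{eq-necperiodiche} and the sign analysis of $\phi_{h,L}$ preceding the statement — that the orbit is a simple closed curve so that the minimal period is exactly twice the one-way transit between $r_-$ and $r_+$.
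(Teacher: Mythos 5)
Your proposal is correct, and its overall strategy coincides with the paper's: both reduce the computation to the quadrature
\begin{equation*}
T_h=\frac{2}{c}\int_{r_-}^{r_+}\frac{hu+\alpha}{\sqrt{(\alpha^2-L^2c^2)+2\alpha h u+(h^2-m^2c^4)u^2}}\,\mathrm{d}u,
\end{equation*}
which is exactly the paper's intermediate formula \eqref{eq-pr16}. The differences lie in how you reach this integral and how you evaluate it, and both of your choices are a bit cleaner. To set up the integrand, the paper combines $l=\sqrt{\phi_{h,L}(r)}$ with $l=m\dot r\sqrt{1+(\phi_{h,L}(r)+L^2/r^2)/(m^2c^2)}$ and then simplifies by ``standard computations''; you instead use energy conservation once to replace the square root by $(hr+\alpha)/(mc^2r)$, obtaining $\dot r=c^2lr/(hr+\alpha)$ directly, which makes the simplification transparent. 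To evaluate the integral, the paper first computes the turning points $r_m,r_M$ explicitly (its formula \eqref{eq-rminmax}), exhibits an explicit primitive involving $\arctan\sqrt{(r_M-u)/(u-r_m)}$, and then uses $r_m+r_M=2\alpha h/(m^2c^4-h^2)$ to conclude; you split the numerator as a multiple of $Q'(u)$ plus the constant $\alpha m^2c^4/(m^2c^4-h^2)$, so the $Q'$-part vanishes at the turning points (since $Q(r_\pm)=0$) and only the standard integral $\int_{r_-}^{r_+}\mathrm{d}u/\sqrt{Q(u)}=\pi/\sqrt{m^2c^4-h^2}$ remains. Your route never needs the explicit expressions for $r_\pm$, and it makes structurally visible why $T_h$ depends on $h$ but not on $L$ (only the leading coefficient of $Q$ and the residual constant enter, both $L$-independent), whereas in the paper this independence emerges a posteriori from Vieta's formula for $r_m+r_M$. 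The treatment of minimality is the same in both arguments: the orbit in the $(r,l)$-plane is a simple closed curve, so the minimal period of $r$ is twice the one-way transit time.
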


\begin{proof} Let us fix $h$ and $L$ such that condition \eqref{eq-necperiodiche} is satisfied, let $x$ be a solution of \eqref{eq-keplero} with energy $h$ and angular momentum $L$, and let $T_h$ be the period of $r=|x|$. Without loss of generality we can assume that $r(0)=r(T_h)=r_{m}$, where $r_m$ is the minimum value of $r$; by symmetry, we then have $r(T_h/2)=r_M$, being $r_M$ the maximum of $r$, and it also holds that $l(t)>0$, for every $t\in (0,T_h/2)$. Moreover, a simple computation proves that
\begin{equation}\label{eq-rminmax}
r_m=\dfrac{\alpha h-\sqrt{\Delta}}{m^2c^4-h^2},\qquad r_M=\dfrac{\alpha h+\sqrt{\Delta}}{m^2c^4-h^2},
\end{equation}
where $\Delta=\alpha^2 m^2c^4+L^2c^2(h^2-m^2c^4)$; notice that $\Delta>0$ by condition \eqref{eq-necperiodiche}.

Consider the equation of the trajectory given in \eqref{eq-traiett2}. Recalling \eqref{eq-funzphi} we have
\begin{equation}\label{eq-pr11}
\dfrac{l(t)}{\sqrt{\phi_{h,L}(r(t))}}=1, \quad \text{for every $t\in (0,T_h/2)$}.
\end{equation}
From the first equation in \eqref{eq-hs2} we have
\begin{equation}\label{eq-pr13}
l=m\dot{r}\, \sqrt{1+\dfrac{l^2+L^2/r^2}{m^2c^2}}=m\dot{r}\, \sqrt{1+\dfrac{\phi_{h,L}(r)+L^2/r^2}{m^2c^2}}.
\end{equation}
By replacing \eqref{eq-pr13} in \eqref{eq-pr11} we deduce that
\begin{equation*}
\dfrac{m\dot{r}(t)\, \sqrt{1+\dfrac{\phi_{h,L}(r(t))+L^2/(r(t))^{2}}{m^2c^2}}}{\sqrt{\phi_{h,L}(r(t))}}=1, \quad \text{for every $t\in (0,T_h/2)$;}
\end{equation*}
by integrating we infer that
\begin{align*}
T_h 
&=2m\int_0^{\frac{T_h}{2}} \dfrac{\dot{r}(t)\, \sqrt{1+\dfrac{\phi_{h,L}(r(t))+L^2/(r(t))^{2}}{m^2c^2}}}{\sqrt{\phi_{h,L}(r(t))}}\,\mathrm{d}t \\
&=2m\int_{r_m}^{r_M} \dfrac{\sqrt{1+\dfrac{\phi_{h,L}(u)+L^2/u^2}{m^2c^2}}}{\sqrt{\phi_{h,L}(u)}}\,\mathrm{d}u.
\end{align*}
Recalling \eqref{eq-funzphi}, by means of standard computations we obtain
\begin{equation}\label{eq-pr16}
\displaystyle T_h=\dfrac{2}{c}\, \int_{r_m}^{r_M} \dfrac{\alpha +hu}{\sqrt{(\alpha^2-L^2c^2)+2\alpha h u+(h^2-m^2c^4)u^2}}\,\mathrm{d}u;
\end{equation}
a primitive of the integrand is given by
\begin{equation*}
\dfrac{-2}{\sqrt{m^2c^4-h^2}}\, \biggl{[}\left(\alpha +\dfrac{1}{2}h(r_m+r_M)\right)\arctan \sqrt{\dfrac{r_M-u}{u-r_m}}+\dfrac{1}{2}h\sqrt{(r_M-u)(u-r_m)}\biggr{]}
\end{equation*}
and hence from \eqref{eq-pr16} we obtain
\begin{equation*}
\displaystyle T_h=\dfrac{2\pi}{c\sqrt{m^2c^4-h^2}}\, \left(\alpha +\dfrac{1}{2}h(r_m+r_M)\right).
\end{equation*}
Taking into account \eqref{eq-rminmax}, we can conclude.
\end{proof}

\subsection{Periodic and quasi-periodic solutions}\label{section-2.3}

Proposition~\ref{teo-periodor} guarantees the periodicity of the modulus $r$ of the solutions $x$ of \eqref{eq-keplero} under condition \eqref{eq-necperiodiche}. To ensure that $x$ is a periodic function, we further need some information on the angular part $\vartheta$.

We first show that it is possible to write the trajectory of $x$ in the polar form $r=\rho(\vartheta)$, for some function $\rho$.
To this end, we first observe that if $x = re^{i\vartheta}$ is a solution of \eqref{eq-keplero} then
\begin{equation*}
\dot{x}=\dot{r}\, e^{i\vartheta}+r\dot{\vartheta}\, ie^{i\vartheta}
\end{equation*}
and so, by \eqref{eq-momentop}, $p=p_r e^{i\vartheta} + p_\vartheta i e^{i\vartheta}$, where
\begin{equation}\label{eq-plungosol}
p_r = \dfrac{m\dot{r}}{\sqrt{1-|\dot{x}|^2/c^2}}, \qquad p_\vartheta = \dfrac{mr\dot{\vartheta}}{\sqrt{1-|\dot{x}|^2/c^2}}.
\end{equation}
Comparing \eqref{eq-momentop} with the second relation in \eqref{eq-cambiovar2}, we deduce that
\begin{equation}\label{eq-inver}
\dfrac{mr\dot{\vartheta}}{\sqrt{1-|\dot{x}|^2/c^2}}=\dfrac{\Phi}{r},
\end{equation}
so that, since $\Phi$ is a constant of motion, we infer that $\dot{\vartheta}$ has a constant sign along the motion. Therefore, $\vartheta$ is invertible with respect to the time. Let $t=t(\vartheta)$ be its inverse.

From \eqref{eq-plungosol} we deduce that
\begin{equation*}
\dfrac{p_r}{p_\vartheta}=\dfrac{\dot{r}}{r\dot{\vartheta}} = \dfrac{1}{\rho}\, \dfrac{\mathrm{d}\rho}{\mathrm{d}\vartheta},
\end{equation*}
where we have set $\rho (\vartheta)=r(t(\vartheta))$.
Using \eqref{eq-cambiovar2} together with \eqref{eq-inver}, this implies that
\begin{equation*}
p_r=\dfrac{\Phi}{\rho^2}\, \dfrac{\mathrm{d}\rho}{\mathrm{d}\vartheta}
\end{equation*}
and finally
\begin{equation*}
|p|^2=\left(\dfrac{\Phi}{\rho^2}\, \dfrac{\mathrm{d}\rho}{\mathrm{d}\vartheta}\right)^{\!2}+\dfrac{\Phi^2}{\rho^2}.
\end{equation*}
As a consequence, recalling \eqref{eq-hamiltoniana} and denoting again by $h$ the energy of $x$ and by $L$ its angular momentum, we infer that
\begin{equation*}
mc^2 \sqrt{1+\dfrac{\left(\dfrac{L}{\rho^2}\, \dfrac{\mathrm{d}\rho}{\mathrm{d}\vartheta}\right)^{\!2}+\dfrac{L^2}{\rho^2}}{m^2c^2}}=\dfrac{\alpha}{\rho}+h.
\end{equation*}
We then deduce that
\begin{equation*}
c^2L^2\, \left(\dfrac{1}{\rho^2}\, \dfrac{\mathrm{d}\rho}{\mathrm{d}\vartheta}\right)^2+\dfrac{c^2L^2-\alpha^2}{\rho^2}-\dfrac{2\alpha h}{\rho}+m^2c^4-h^2=0,
\end{equation*}
so that, passing to the usual $\rho$-inverse variable $s=1/\rho$ (the so-called Clairaut's change of variable), we obtain
\begin{equation}\label{eq-pr22}
c^2L^2\, \left(\dfrac{\mathrm{d}s}{\mathrm{d}\vartheta}\right)^2+(c^2 L^2-\alpha^2)s^2-2\alpha hs+m^2c^4-h^2=0.
\end{equation}
Differentiating \eqref{eq-pr22}, we then deduce
\begin{equation*}
c^2L^2\, \dfrac{\mathrm{d}^2 s}{\mathrm{d}\vartheta^2}+(c^2L^2-\alpha^2)s-\alpha h=0,
\end{equation*}
which can be solved to obtain
\begin{equation*}
s(\vartheta)=A\cos \biggl{(} \sqrt{1-\dfrac{\alpha^2}{c^2L^2}} (\vartheta-\vartheta_0) \biggr{)}+\dfrac{\alpha h}{c^2L^2-\alpha^2},
\end{equation*}
where $A$ and $\vartheta_0$ are arbitrary real constants. 
However, in order for $s(\vartheta)$ to solve the original equation \eqref{eq-pr22} we
need to require
\begin{equation*}
A=\dfrac{\sqrt{\alpha^2 m^2c^4+(h^2-m^2c^4)c^2L^2}}{c^2L^2-\alpha^2};
\end{equation*}
notice that $m^2c^4+(h^2-m^2c^4)c^2L^2>0$ by \eqref{eq-necperiodiche}. Moreover, 
\begin{equation*}
A < \dfrac{\alpha h}{c^2L^2-\alpha^2},
\end{equation*}
again by condition \eqref{eq-necperiodiche}. Then, $s(\vartheta) > 0$ for every $\vartheta \in \mathbb{R}$ and we conclude that
\begin{equation}\label{eq-rdipendedatheta}
\rho(\vartheta)=\dfrac{1}{\dfrac{\sqrt{\alpha^2 m^2c^4+(h^2-m^2c^4)c^2L^2}}{c^2L^2-\alpha^2}\, \cos \left(\sqrt{1-\dfrac{\alpha^2}{c^2L^2}} (\vartheta-\vartheta_0)\right)+\dfrac{\alpha h}{c^2L^2-\alpha^2}}.
\end{equation}
This provides the explicit polar equation for the trajectory of a solution $x$ of \eqref{eq-keplero}, when \eqref{eq-necperiodiche} is satisfied.

Wishing to investigate the periodicity of $x$, the crucial observation is that the function $\rho$ is periodic of minimal period
\begin{equation}\label{eq-deltatheta0}
\Delta\vartheta= \dfrac{2\pi}{\sqrt{1-\dfrac{\alpha^2}{c^2L^2}}}.
\end{equation}
As a consequence, the planar curve of polar equation $r=\rho(\vartheta)$ is closed if and only if $\Delta\vartheta$ is commensurable with $2\pi$, i.e.~if and only if there exist $k, n\in \mathbb{N}$ such that
\begin{equation}\label{eq-commensurabile}
\sqrt{1-\dfrac{\alpha^2}{c^2L^2}}=\dfrac{n}{k},
\end{equation}
where $n$ and $k$ are relatively prime. 
Precisely, if \eqref{eq-commensurabile} holds true, then 
the planar curve $\rho(\vartheta) e^{i\vartheta}$ 
is periodic with minimal period $2\pi k$, which is exactly $n$ times the period of the radial component $\rho(\vartheta)$. 
Then, the solution
\begin{equation*}
x(t)=\rho(\vartheta(t))\, e^{i\vartheta(t)}=r(t\, )e^{i\vartheta (t)}
\end{equation*}
has minimal period equal to $n T_h$, where $T_h$ is defined in \eqref{eq-periodor}; moreover, on the periodicity interval $[0,n T_h]$ the angular coordinate covers the angle
\begin{equation*}
n \Delta \vartheta = 2\pi k,
\end{equation*}
so that the winding number of $x$ equals to $\pm k$, depending on the sign of the angular momentum (the name $\Delta\vartheta$ is indeed justified by the fact that this quantity is exactly the angular variation made by $\vartheta = \vartheta(t)$ in the time from $t = 0$ to $t = T_h$, that is $\Delta\vartheta = \vartheta(T_h) - \vartheta(0)$).

Recalling the definition of $r^*$ given in \eqref{eq-rstarbasso}, we can thus state the following result.

\begin{proposition}\label{teo-periodox}
Let $x$ be a solution of \eqref{eq-keplero} with energy $h$ and angular momentum $L$.
If \eqref{eq-necperiodiche} holds true and \eqref{eq-commensurabile} is satisfied for some coprime integers $n$ and $k$,
then $x$ is periodic of minimal period 
\begin{equation*}
n T_{h} = \dfrac{2\pi \alpha m^2 c^3 n}{(m^2c^4-h^2)^{\frac{3}{2}}}.
\end{equation*}
Moreover, the following holds:
\begin{itemize}
\item[$(i)$] $|x|$ assumes the value $r^{*}$ exactly $2n$ times in the interval $[0,n T_h)$, always with non-zero derivative;
\item[$(ii)$] $x$ has winding number equal to $\mathrm{sgn}(L) \cdot k$ in the interval $[0, n T_h)$.
\end{itemize}
\end{proposition}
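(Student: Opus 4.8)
The plan is to assemble the three claims from the analysis already carried out in this subsection, the essential point being that the radial and angular motions are coupled only through the commensurability condition \eqref{eq-commensurabile}. First I would pin down the angular increment accumulated over one radial period. From the third equation in \eqref{eq-hs2}, or equivalently from \eqref{eq-inver}, the angular momentum $\Phi \equiv L$ is constant, so $\dot\vartheta$ keeps the constant sign $\mathrm{sgn}(L)$; moreover its integrand depends only on $r(t)$ and $l(t)$, which are $T_h$-periodic by Proposition~\ref{teo-periodor}, hence $\dot\vartheta$ is itself $T_h$-periodic. Consequently the quantity
\[
\delta := \vartheta(t+T_h) - \vartheta(t) = \int_{t}^{t+T_h}\dot\vartheta(s)\,\mathrm{d}s
\]
is independent of $t$ and coincides with the angular variation of the polar trajectory \eqref{eq-rdipendedatheta} over one radial oscillation, namely $\delta = \mathrm{sgn}(L)\,\Delta\vartheta$ with $\Delta\vartheta$ as in \eqref{eq-deltatheta0}; under \eqref{eq-commensurabile} this reads $\delta = \mathrm{sgn}(L)\,2\pi k/n$.

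For the periodicity and the minimal period, I would write $x(t) = r(t)e^{i\vartheta(t)}$ and proceed in two steps. Since $r(t+nT_h)=r(t)$ and $\vartheta(t+nT_h) = \vartheta(t)+n\delta = \vartheta(t)+\mathrm{sgn}(L)\,2\pi k$, we obtain $x(t+nT_h)=x(t)$, so that $nT_h$ is a period. For minimality, observe that any period $P$ of $x$ is in particular a period of $|x| = r$; as $T_h$ is the \emph{minimal} period of $r$ by Proposition~\ref{teo-periodor}, necessarily $P = jT_h$ for some integer $j \geq 1$. Matching the angular parts then forces $e^{ij\delta}=1$, i.e.\ $n \mid jk$, which gives $n \mid j$ because $\gcd(n,k)=1$; the least admissible value is $j=n$, whence the minimal period equals $nT_h$, that is, the value \eqref{eq-periodor} multiplied by $n$.

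Claim~$(ii)$ is then immediate: the winding number of $x$ on $[0,nT_h)$ is
\[
\frac{1}{2\pi}\int_0^{nT_h}\dot\vartheta(t)\,\mathrm{d}t = \frac{n\delta}{2\pi} = \mathrm{sgn}(L)\,k.
\]
For claim~$(i)$, I would recall that under \eqref{eq-necperiodiche} we are in Case~3 with $h>0$, so $\phi_{h,L}$ attains a \emph{positive} maximum at $r = r^*$, which therefore lies strictly between the two simple zeros $r_m$ and $r_M$. Over a single radial period $r$ increases monotonically from $r_m$ to $r_M$ and then decreases back to $r_m$, so it meets the interior value $r^*$ exactly twice; at each such time $l^2 = \phi_{h,L}(r^*) > 0$, hence $\dot r \neq 0$ by \eqref{eq-pr13}. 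Multiplying by the $n$ radial periods contained in $[0,nT_h)$ yields exactly $2n$ crossings, all transversal.

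The only genuinely delicate step is the minimality argument: one must use that $T_h$ is the minimal period of $r$ (not merely a period) together with the coprimality of $n$ and $k$, since otherwise a strictly smaller period of $x$ could a priori occur. Everything else is a direct bookkeeping consequence of formulas \eqref{eq-rdipendedatheta}--\eqref{eq-deltatheta0} and of Proposition~\ref{teo-periodor}.
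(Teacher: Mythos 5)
Your proof is correct and follows essentially the same route as the paper: both rest on the polar equation \eqref{eq-rdipendedatheta}, the minimal period $\Delta\vartheta$ from \eqref{eq-deltatheta0}, and the commensurability condition \eqref{eq-commensurabile}, with the angular advance per radial period equal to $\mathrm{sgn}(L)\,2\pi k/n$. The only difference is one of explicitness: the paper transfers minimality through the closed curve $\rho(\vartheta)e^{i\vartheta}$ (minimal period $2\pi k$ in $\vartheta$) and leaves item $(i)$ to the phase-plane discussion, whereas you argue minimality directly in time (any period of $x$ is a multiple $jT_h$ of the minimal period of $r$, and $e^{ij\delta}=1$ forces $n \mid j$ by coprimality) and give the transversal crossing count for $(i)$ explicitly --- both are faithful, slightly more detailed renderings of steps the paper leaves implicit.
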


In Figure~\ref{fig-2} a qualitative picture of some trajectories in the $x$-plane is drawn.

\begin{figure}[htb]
\centering
\begin{subfigure}[t]{.45\textwidth}
\centering
\begin{tikzpicture}[scale=0.51]
\definecolor{bdf-blue}{RGB}{0,51,153}
\definecolor{bdf-red}{RGB}{153,51,0}
\begin{polaraxis}[
xtick={0,45,90,135,180,225,270,315,360},
xticklabels={$\,0$,$\dfrac{\pi}{4}$,$\dfrac{\pi}{2}$,$\dfrac{3}{4}\pi$,$\pi\,$,$\dfrac{5}{4}\pi$,$\dfrac{3}{2}\pi$,$\dfrac{7}{4}\pi$,},
ytick={},ylabel={},yticklabels={},
major tick length=0ex,
axis line style = {draw=gray,line width=0.05pt},
]
\addplot[color=bdf-blue,line width=1.3pt,domain=0:720,smooth,samples=2000]{1/(1.69706*cos(1*x/2)+2.1)};
\addplot[color=bdf-red,line width=1pt,dash pattern=on 4pt off 4pt,domain=0:360,samples=600]{0.47619};
\end{polaraxis}
\end{tikzpicture}
\caption{Trajectory for $(n,k)=(1,2)$.}
\end{subfigure}
\quad
\begin{subfigure}[t]{.45\textwidth}
\centering
\begin{tikzpicture}[scale=0.51]
\definecolor{bdf-blue}{RGB}{0,51,153}
\definecolor{bdf-red}{RGB}{153,51,0}
\begin{polaraxis}[
xtick={0,45,90,135,180,225,270,315,360},
xticklabels={$\,0$,$\dfrac{\pi}{4}$,$\dfrac{\pi}{2}$,$\dfrac{3}{4}\pi$,$\pi\,$,$\dfrac{5}{4}\pi$,$\dfrac{3}{2}\pi$,$\dfrac{7}{4}\pi$,},
ytick={},ylabel={},yticklabels={},
major tick length=0ex,
axis line style = {draw=gray,line width=0.05pt},
]
\addplot[color=bdf-blue,line width=1.3pt,domain=0:1080,smooth,samples=2000]{1/(5.22303*cos(1*x/3)+5.6)};
\addplot[color=bdf-red,line width=1pt,dash pattern=on 4pt off 4pt,domain=0:360,samples=600]{0.178571};
\end{polaraxis}
\end{tikzpicture}
\caption{Trajectory for $(n,k)=(1,3)$.}
\end{subfigure}
\vspace{10pt}
\\
\begin{subfigure}[t]{.45\textwidth}
\centering
\begin{tikzpicture}[scale=0.51]
\definecolor{bdf-blue}{RGB}{0,51,153}
\definecolor{bdf-red}{RGB}{153,51,0}
\begin{polaraxis}[
xtick={0,45,90,135,180,225,270,315,360},
xticklabels={$\,0$,$\dfrac{\pi}{4}$,$\dfrac{\pi}{2}$,$\dfrac{3}{4}\pi$,$\pi\,$,$\dfrac{5}{4}\pi$,$\dfrac{3}{2}\pi$,$\dfrac{7}{4}\pi$,},
ytick={},ylabel={},yticklabels={},
major tick length=0ex,
axis line style = {draw=gray,line width=0.05pt},
]
\addplot[color=bdf-blue,line width=1.3pt,domain=0:1080,smooth,samples=2000]{1/(0.357946*cos(2*x/3)+0.875)};
\addplot[color=bdf-red,line width=1pt,dash pattern=on 4pt off 4pt,domain=0:360,samples=600]{1.14286};
\end{polaraxis}
\end{tikzpicture}
\caption{Trajectory for $(n,k)=(2,3)$.}
\end{subfigure} 
\quad
\begin{subfigure}[t]{.45\textwidth}
\centering
\begin{tikzpicture}[scale=0.51]
\definecolor{bdf-blue}{RGB}{0,51,153}
\definecolor{bdf-red}{RGB}{153,51,0}
\begin{polaraxis}[
xtick={0,45,90,135,180,225,270,315,360},
xticklabels={$\,0$,$\dfrac{\pi}{4}$,$\dfrac{\pi}{2}$,$\dfrac{3}{4}\pi$,$\pi\,$,$\dfrac{5}{4}\pi$,$\dfrac{3}{2}\pi$,$\dfrac{7}{4}\pi$,},
ytick={},ylabel={},yticklabels={},
major tick length=0ex,
axis line style = {draw=gray,line width=0.05pt},
]
\addplot[color=bdf-blue,line width=1.3pt,domain=0:1800,smooth,samples=2000]{1/(3.29061*cos(2*x/5)+3.675)};
\addplot[color=bdf-red,line width=1pt,dash pattern=on 4pt off 4pt,domain=0:360,samples=600]{0.272109};
\end{polaraxis}
\end{tikzpicture}
\caption{Trajectory for $(n,k)=(2,5)$.}
\end{subfigure} 
\vspace{10pt}
\\
\begin{subfigure}[t]{.45\textwidth}
\centering
\begin{tikzpicture}[scale=0.51]
\definecolor{bdf-blue}{RGB}{0,51,153}
\definecolor{bdf-red}{RGB}{153,51,0}
\begin{polaraxis}[
xtick={0,45,90,135,180,225,270,315,360},
xticklabels={$\,0$,$\dfrac{\pi}{4}$,$\dfrac{\pi}{2}$,$\dfrac{3}{4}\pi$,$\pi\,$,$\dfrac{5}{4}\pi$,$\dfrac{3}{2}\pi$,$\dfrac{7}{4}\pi$,},
ytick={},ylabel={},yticklabels={},
major tick length=0ex,
axis line style = {draw=gray,line width=0.05pt},
]
\addplot[color=bdf-blue,line width=1.3pt,domain=0:1800,smooth,samples=2000]{1/(0.801234*cos(3*x/5)+1.24444)};
\addplot[color=bdf-red,line width=1pt,dash pattern=on 4pt off 4pt,domain=0:360,samples=600]{0.803571};
\end{polaraxis}
\end{tikzpicture}
\caption{Trajectory for $(n,k)=(3,5)$.}
\end{subfigure} 
\quad
\begin{subfigure}[t]{.45\textwidth}
\centering
\begin{tikzpicture}[scale=0.51]
\definecolor{bdf-blue}{RGB}{0,51,153}
\definecolor{bdf-red}{RGB}{153,51,0}
\begin{polaraxis}[
xtick={0,45,90,135,180,225,270,315,360},
xticklabels={$\,0$,$\dfrac{\pi}{4}$,$\dfrac{\pi}{2}$,$\dfrac{3}{4}\pi$,$\pi\,$,$\dfrac{5}{4}\pi$,$\dfrac{3}{2}\pi$,$\dfrac{7}{4}\pi$,},
ytick={},ylabel={},yticklabels={},
major tick length=0ex,
axis line style = {draw=gray,line width=0.05pt},
]
\addplot[color=bdf-blue,line width=1.3pt,domain=0:2880,smooth,samples=2000]{1/(0.629971*cos(5*x/8)+1.092)};
\addplot[color=bdf-red,line width=1pt,dash pattern=on 4pt off 4pt,domain=0:360,samples=600]{0.915751};
\end{polaraxis}
\end{tikzpicture}
\caption{Trajectory for $(n,k)=(5,8)$.}
\end{subfigure} 
\caption{Dynamics of the solutions of \eqref{eq-keplero} in the $x$-plane: the blue line represents the trajectory of $x$ in the polar form $r=\rho(\vartheta)$ given in \eqref{eq-rdipendedatheta} with $\alpha=m=c=1$ and $h=0.7$, the red dashed line represents the value $r^{*}$ given in \eqref{eq-rstarbasso}. For some values of $(n,k)$ the figure shows that conditions $(i)$ and $(ii)$ of Proposition~\ref{teo-periodox} are satisfied.}
\label{fig-2}
\end{figure}
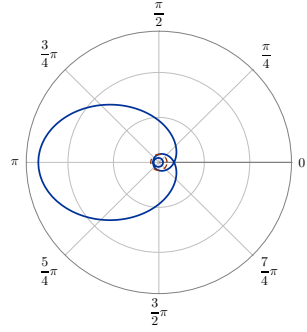
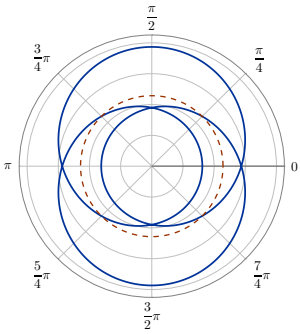
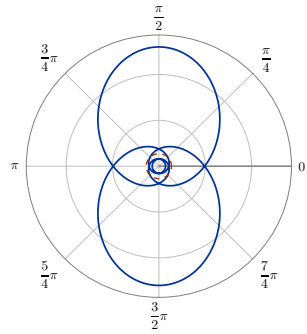
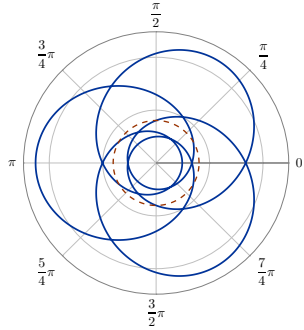
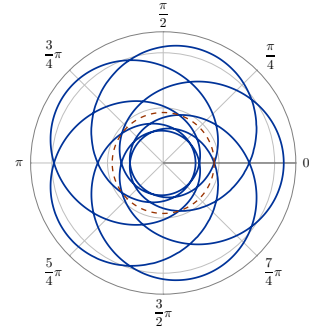

\begin{remark}[Quasi-periodic solutions]\label{rem-quasiperiodiche}
The case when 
\begin{equation}\label{eq-noncommens}
\sqrt{1-\dfrac{\alpha^2}{c^2L^2}}\notin \mathbb{Q}
\end{equation}
gives rise to quasi-periodic solutions: indeed, assume that \eqref{eq-noncommens} holds true and let
\begin{equation*}
\omega = \dfrac{2\pi}{T_h\sqrt{1-\dfrac{\alpha^2}{c^2L^2}}}.
\end{equation*}
Then, for every solution $x$ with energy $h$ and angular momentum $L$, from the facts that $r=|x|$ has minimal period $T_h$ and that $r=\rho(\vartheta)$, we deduce that the function 
$t \mapsto \vartheta(t) - \omega t$ 
is $T_h$-periodic. As a consequence, we can write
\begin{equation*}
x(t)=r(t)\, e^{i\vartheta(t)}=r(t)\, e^{i(\vartheta(t)-\omega t)}\, e^{i\omega t}={\hat x}(t)\, e^{i\omega t},\quad \text{for every $t\in \mathbb{R}$,}
\end{equation*}
since ${\hat x}$ is $T_h$-periodic and 
\begin{equation*}
\dfrac{2\pi}{\omega}\, \dfrac{1}{T_h}=\sqrt{1-\dfrac{\alpha^2}{c^2L^2}}\notin \mathbb{Q},
\end{equation*}
by \eqref{eq-noncommens}, the conclusion follows.
\end{remark}

\subsection{Action-angle variables}\label{section-2.4}

In this section, we construct action-angle variables for the Hamiltonian system \eqref{eq-hs}. Let us consider the open set
\begin{equation*}
\Lambda_0 = \left\{ (x,p) \in (\mathbb{R}^2 \setminus \{0\}) \times \mathbb{R}^2 \colon 
\begin{array}{l}
0 < H_0(x,p) < mc^2 \vspace{3pt}\\
L_0(x,p) >0 \vspace{3pt}\\ 
\dfrac{\alpha^2}{c^2} < L_0(x,p)^2 < \dfrac{\alpha^2 m^2 c^2}{m^2 c^4 - (H_0(x,p))^2}
\end{array}
\right\}
\end{equation*}
and set $\Omega_0 = \Psi^{-1}(\Lambda_0)$, where $\Psi$ is defined in \eqref{eq-cambiovar1} and \eqref{eq-cambiovar2}. More explicitly,
\begin{equation*}
\Omega_0 = \left\{ (r,\vartheta,l,\Phi) \in \Omega \colon
\begin{array}{l}
0 < \mathcal{H}_0(r,\vartheta,l,\Phi) < mc^2 \vspace{3pt}\\
\mathcal{L}_0(r,\vartheta,l,\Phi) >0 \vspace{3pt}\\ 
\dfrac{\alpha^2}{c^2} < \mathcal{L}_0(r,\vartheta,l,\Phi)^2 < \dfrac{\alpha^2 m^2 c^2}{m^2 c^4 - (\mathcal{H}_0(r,\vartheta,l,\Phi))^2}
\end{array}
\right\}.
\end{equation*}
As discussed in the previous section, every $(x^*,p^*) \in \Lambda_0$ (or, equivalently, every $(r^*,\vartheta^*,l^*,\Phi^*) \in \Omega_0$) is the initial condition of a quasi-periodic solution of \eqref{eq-hs} (equivalently, of \eqref{eq-hs2}); such a solution is actually a periodic one whenever 
\eqref{eq-commensurabile} is satisfied. In any case, the corresponding projection in the $(r,l)$-plane is a (non-constant) closed orbit.

For every values $h$ and $L$, with $L > 0$, satisfying \eqref{eq-necperiodiche}, the (compact and connected) level set
\begin{align*}
\Omega_0(h,L) & = \bigl{\{} (r,\vartheta,l,\Phi) \in \Omega_0 \colon \mathcal{H}_0(r,\vartheta,l,\Phi) = h, \, \mathcal{L}_0(r,\vartheta,l,\Phi) = L \bigr{\}} \\
& = \bigl{\{} (r,\vartheta,l,L) \in \Omega_0 \colon \mathcal{H}_0(r,\vartheta,l,L) = h \bigr{\}}
\end{align*}
is diffeomorphic to a torus $\mathbb{T}^2$: this follows from the Liouville--Arnold theorem (see \cite[Chapter~10]{Ar-89} or \cite[Chapter~3.1]{MoZe-05}), since on this set
the integrals $\mathcal{H}_0$ and $\mathcal{L}_0$ are linearly independent and in involution, as it is easy to check.
Following the proof of this theorem, action-angle coordinates can be constructed in a standard way.

The precise definition of the angles $(\varphi_1,\varphi_2)$ is not needed for our purposes, and we refer again to \cite[Chapter~10]{Ar-89} for the details. On the other hand, the two action variables $I_1, I_2$ are defined through the formulas
\begin{equation*}
I_i(h,L) = \frac{1}{2\pi} \oint_{\gamma_i} \left( l \,\mathrm{d}r + L \,\mathrm{d}\vartheta\right), \qquad i=1,2, 
\end{equation*}
where $\gamma_1,\gamma_2$ are two independent cycles on the torus $\Omega_0(h,L)$.

The cycle $\gamma_1$ can be obtained in this way: fixed a point $(r,\vartheta,l,L) \in \Omega_0(h,L)$ (for simplicity, we take $\vartheta=0$), we first follow the evolution through the Hamiltonian flow for the period $T_h$ defined in \eqref{eq-periodor} (in such a way that the projection on the $(r,l)$-plane completes a full turn) and later we move $\vartheta$ with velocity equal to $-1$ until we reach $\vartheta = 2\pi$.
That is 
\begin{equation*}
\gamma_1(t) = \begin{cases}
(r(t),\vartheta(t),l(t),\Phi(t)), & \text{if $0 \leq t \leq T_h$,} \\
(r(T_h), - (t-T_h) + \Delta\vartheta,l(T_h),\Phi(T_h)), & \text{if $T_h \leq t \leq T_h + \beta$,}
\end{cases}
\end{equation*}
where $(r(t),\vartheta(t),l(t),\Phi(t))$ is the solution of \eqref{eq-hs2} with $(r(0),\vartheta(0),l(0),\Phi(0)) = (r,0,l,L) \in \Omega_0(h,L)$ (notice that $\Phi(t) \equiv L$), $\Delta\vartheta$ is as in \eqref{eq-deltatheta0} (recall that $\Delta\vartheta = \vartheta(T_h)$), and 
$\beta = \Delta\vartheta-2\pi$.
A simple computation yields
\begin{equation}\label{eq-defi1}
I_1(h,L) = \frac{\mathcal{A}(h,L)}{2\pi} + L,
\end{equation} 
where 
\begin{equation*}
\mathcal{A}(h,L) = \frac{2\pi}{c} \left( \frac{\alpha h}{\sqrt{m^2 c^4 - h^2}} - \sqrt{c^2 L^2 - \alpha^2}\right)
\end{equation*}
is the area of the region in the $(r,l)$-plane enclosed by the curve $\mathcal{H}_0(r,\vartheta,l,L) = h$.

The cycle $\gamma_2$ is simply defined as $\gamma_2(t) = (r,t,l,L)$ for $t \in [0,2\pi]$, where again $(r,0,l,L) \in \Omega_0(h,L)$, so that
\begin{equation}\label{eq-defi2}
I_2(h,L) = L.
\end{equation}

As is well known, the change of variables $(r,\vartheta,l,L) \mapsto (I_1,I_2,\varphi_1,\varphi_2)$ is symplectic, that is
\begin{equation*}
\mathrm{d}r \wedge \mathrm{d}l + \mathrm{d}\vartheta \wedge \mathrm{d}\Phi = \mathrm{d}I_1 \wedge \mathrm{d}\varphi_1 + \mathrm{d}I_2 \wedge \mathrm{d}\varphi_2,
\end{equation*}
and in principle can be defined on a neighborhood of $\Omega_0(h,L)$ in $\Omega_0$. 
As a consequence of this construction, however, it turns out that this change of variables provides a global symplectic diffeomoprhism
\begin{equation*}
\Sigma\colon \Omega_0 \to \mathcal{D} \times \mathbb{T}^2, \qquad (r,\vartheta,l,L) \mapsto (I_1,I_2,\varphi_1,\varphi_2),
\end{equation*}
where $\mathcal{D}$ is the open set of $\mathbb{R}^2$ in which $I_1,I_2$ are allowed to vary (we omit the explicit definition, which is complicated and useless for our purposes).
In this domain, the Hamiltonian in action-angle coordinates can be obtained by inverting \eqref{eq-defi1} (and recalling \eqref{eq-defi2}); after some computations, we obtain
\begin{equation}\label{eq-hamactang}
\mathcal{K}_0(I_1,I_2,\varphi_1,\varphi_2) = \mathcal{K}_0(I_1,I_2) = mc^2 \dfrac{I_1 - I_2 + \dfrac{1}{c}\sqrt{c^2I_2^2 - \alpha^2}}{\sqrt{(I_1-I_2)^2 + I_2^2+\dfrac{2}{c}(I_1-I_2) \sqrt{c^2 I_2^2 - \alpha^2}}}.
\end{equation}

\begin{remark}[Passing to the non-relativistic limit]\label{rem-rlimite}
It can be interesting to observe that all the theory of the Kepler problem (see, for instance, \cite[Chapter~1]{Po-76}) can be recovered by passing to the non-relativistic limit $c \to +\infty$. Indeed, writing 
\begin{equation*}
H_0 = \mathcal{E} + mc^2,
\end{equation*}
where $H_0$ is the relativistic energy (that is, the Hamiltonian \eqref{eq-hamiltoniana}) and $\mathcal{E}$ is the energy difference from the particle rest energy,
and denoting by 
\begin{equation*}
\mathcal{E}_{nr} = \frac{1}{2} m | \dot x |^2 - \frac{\alpha}{| x |}
\end{equation*}
the usual non-relativistic energy, a simple computation shows that $\mathcal{E} \to \mathcal{E}_{nr}$ as $c \to +\infty$.
Analogously, denoting by
\begin{equation*}
L_{nr} = \langle Jx , m \dot x \rangle
\end{equation*}
the usual non-relativistic angular momentum, it holds that $L \to L_{nr}$ as $c \to +\infty$.

Using the above relationships, by passing to the limit in \eqref{eq-necperiodiche}, it follows that the admissible values
of $\mathcal{E}_{nr}$ and $L_{nr}$ for the existence of (non-circular) closed orbits in the $(r,\dot r)$-plane satisfy the well-known conditions
\begin{equation*}
\mathcal{E}_{nr} < 0 \quad \text{ and } \quad 0 < L_{nr}^2 < \frac{\alpha^2 m}{-2 \mathcal{E}_{nr}}.
\end{equation*}
By taking the limit in \eqref{eq-deltatheta0} it follows that $\Delta\vartheta = 2\pi$, 
so that all these solutions are periodic also in the $x$-plane, with winding number $\pm 1$ in the period
\begin{equation*}
T_{h,nr} = \frac{2\pi \alpha m^2}{(-2\mathcal{E}_{nr})^{\frac{3}{2}}}.
\end{equation*}
This can be obtained by passing to the limit in \eqref{eq-periodor}, and is nothing but the third Kepler's law; moreover, taking the limit in the polar equation \eqref{eq-rdipendedatheta} provides
\begin{equation*}
\rho(\vartheta) = \frac{L_{nr}^2}{\sqrt{\alpha^2 m^2 + 2m \mathcal{E}_{nr} L_{nr}^2} \cos(\vartheta-\vartheta_0) + \alpha m},
\end{equation*}
which is the polar equation of a Keplerian ellipse.

Finally, writing in \eqref{eq-hamactang} $\mathcal{K}_0 = \tilde{\mathcal{K}}_0 + mc^2$, a simple computation shows that the non-relativistic limit of $\tilde{\mathcal{K}}_0$ becomes
\begin{equation*}
\mathcal{K}_{0,nr}(I_{1,nr}) = -\frac12 \frac{\alpha^2 m}{I_{1,nr}^2}, 
\end{equation*}
which corresponds to the usual expression of the Kepler Hamiltonian in action-angle coordinates $I_{i,nr}$ and $\varphi_{i,nr}$ (the so-called Delaunay variables, see \cite[Chapter~3.2]{MoZe-05}). 
\end{remark}

\section{The perturbed problem} \label{section-3}

In this section, we deal with the perturbed problem \eqref{eq-completa}. 
We first recall a recent existence result for $T$-periodic solutions of a perturbed Hamiltonian system (Section~\ref{section-3.1}) which is subsequently exploited to prove our main result (Section~\ref{section-3.2}).

\subsection{Periodic perturbations of completely integrable Hamiltonian systems}\label{section-3.1}

In this section, we briefly describe a result from \cite{FoGaGi-16} (see also \cite{AmCoEk-87,BeKa-87,Ch-92}), dealing with the existence of $T$-periodic solutions for the Hamiltonian system
\begin{equation}\label{hs}
\begin{cases}
\, \dot I = - \varepsilon \, \nabla_\varphi \mathcal{R}(t,\varphi\,I), \\
\, \dot \varphi = \nabla \mathcal{K}(I) + \varepsilon \, \nabla_I \mathcal{R}(t,\varphi,I),
\end{cases}
\end{equation}
where $\mathcal{K}\colon \mathcal{D} \to \mathbb{R}$ is twice continuously differentiable, $\mathcal{R}\colon \mathbb{R} \times \mathcal{D} \times \mathbb{T}^N \to \mathbb{R}$ is continuous in all its variables, continuously differentiable in $(I,\varphi)$ and $T$-periodic in $t$,
and $\varepsilon > 0$ is a (small) parameter. Throughout this section, it is convenient to mean $\mathbb{T}^N$ as the set 
$[0,2\pi)^N$ with the standard torus topology; the natural covering projection $\mathbb{R}^N \ni \tilde\varphi \mapsto \varphi \in \mathbb{T}^N$ will be denoted by $\Pi$.

System \eqref{hs} arises as a time-periodic perturbation of the completely integrable Hamiltonian system
in action-angle coordinates
\begin{equation}\label{hs0}
\begin{cases}
\, \dot I = 0, \\
\, \dot \varphi = \nabla \mathcal{K}(I). 
\end{cases}
\end{equation}
The dynamics of the above system is well-known, consisting of periodic and quasi-periodic solutions lying on invariant tori. 

In the following, we assume that, for some $\bar I \in \mathcal{D}$,
\begin{equation}\label{toro}
T \nabla \mathcal{K}(\bar I) \in 2\pi \mathbb{Z}^N 
\end{equation}
and that $T$ is the minimum positive real number satisfying the above property.
This means that the invariant torus $\mathbb{T}^N \times \{\bar I\}$ is filled by periodic orbits with minimal period $T$.

The next result, which is a corollary of \cite[Corollary~2.2]{FoGaGi-16}, ensures the survival of $N+1$ of the above $T$-periodic solutions when $\varepsilon$ is small enough, provided that a non-degeneracy condition is satisfied. 

\begin{theorem}\label{fgg}
Let us suppose that \eqref{toro} holds true and that
\begin{equation}\label{hessiano}
\det \nabla^2 \mathcal{K}(\bar I) \neq 0.
\end{equation}
Then, for every $\sigma > 0$ there exists $\varepsilon^* > 0$ such that, for every $\varepsilon \in (0,\varepsilon^*)$, system
\eqref{hs} has at least $N+1$ solutions with period $T$, satisfying
\begin{equation}\label{vicino}
\max_{t \in [0,T]} | \tilde\varphi(t) - \tilde\varphi(0) - t \nabla \mathcal{K}(\bar I) | + \max_{t \in [0,T]} |I(t) - \bar I | \leq \sigma,
\end{equation}
where $\tilde\varphi \colon [0,T] \to \mathbb{R}^N$ is a lifting of $\varphi$ with respect to the covering projection $\Pi$.
\end{theorem}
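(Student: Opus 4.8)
The plan is to deduce Theorem~\ref{fgg} directly from \cite[Corollary~2.2]{FoGaGi-16}, after checking that the two hypotheses \eqref{toro} and \eqref{hessiano} supply exactly the assumptions required there. Conceptually, the underlying mechanism is a higher-dimensional Poincar\'e--Birkhoff argument, and I would first make this transparent by considering the time-$T$ Poincar\'e map $P_\varepsilon$ of \eqref{hs}. For $\varepsilon = 0$ the flow of \eqref{hs0} is explicit, $I(t) = I(0)$ and $\varphi(t) = \varphi(0) + t\nabla\mathcal{K}(I(0))$, so $P_0(\varphi,I) = (\varphi + T\nabla\mathcal{K}(I), I)$. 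Condition \eqref{toro} guarantees that $P_0$ restricts to the identity on the resonant torus $\{I = \bar I\}$ (modulo the covering projection $\Pi$), so that this torus is entirely filled with $T$-periodic orbits of \eqref{hs0}, while the minimality of $T$ ensures these orbits have $T$ as minimal period. Thus fixed points of $P_\varepsilon$ near $\{I=\bar I\}$ correspond to the $T$-periodic solutions we seek.

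Next I would verify the twist. Differentiating the angle increment $I \mapsto T\nabla\mathcal{K}(I)$ at $\bar I$ produces $T\nabla^2\mathcal{K}(\bar I)$, which by \eqref{hessiano} is invertible. This is precisely the non-degenerate twist condition: as $I$ crosses $\bar I$ the rotation vector $\nabla\mathcal{K}(I)$ varies at full rank, so on a small box around $\{I=\bar I\}$ the frequency map is a local diffeomorphism and $P_0$ twists in all $N$ angular directions. Since $P_\varepsilon$ is the time-$T$ map of a Hamiltonian flow, it is an exact symplectic diffeomorphism, and for $\varepsilon$ small it stays a small perturbation of $P_0$ on the chosen box; hence the hypotheses of the Poincar\'e--Birkhoff theorem of \cite{FoUr-17}, as packaged in \cite[Corollary~2.2]{FoGaGi-16}, are met.

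I would then invoke that corollary to produce at least $N+1$ fixed points of $P_\varepsilon$ in the chosen neighborhood, the lower bound $N+1$ being the topological count (Lusternik--Schnirelmann category of $\mathbb{T}^N$) delivered by the theorem. Each such fixed point gives a $T$-periodic solution of \eqref{hs}, and shrinking the box to width $\sigma$ about $\bar I$, together with continuous dependence of the flow on $\varepsilon$, yields the localization estimate \eqref{vicino} once $\varepsilon^*$ is taken small enough depending on $\sigma$. The regularity demanded by the cited result is met, since $\mathcal{K} \in C^2$ and $\mathcal{R}$ is continuous, $C^1$ in $(I,\varphi)$ and $T$-periodic in $t$; if $\mathcal{R}$ is only defined on the open set $\mathcal{D} \times \mathbb{T}^N$, I would first restrict to a closed ball $\overline{B(\bar I,\sigma)} \subset \mathcal{D}$, which entails no loss because only solutions near $\bar I$ are sought.

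The main obstacle is bookkeeping rather than conceptual: one must confirm that the abstract non-degeneracy hypothesis of \cite[Corollary~2.2]{FoGaGi-16} --- typically phrased through the twist of the unperturbed return map or of a generating/averaged function --- is genuinely equivalent to the clean Hessian condition \eqref{hessiano}, and that the uniform a priori estimates keeping the perturbed solutions inside the box are strong enough for the twist to persist under perturbation. Once these identifications are made, the conclusion follows at once from the cited corollary.
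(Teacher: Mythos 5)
Your proposal takes essentially the same route as the paper: the paper gives no independent proof of Theorem~\ref{fgg}, presenting it precisely as a corollary of \cite[Corollary~2.2]{FoGaGi-16}, whose hypotheses for system \eqref{hs} in action-angle form are exactly the resonance condition \eqref{toro} and the non-degenerate Hessian \eqref{hessiano}, so your hypothesis-checking and localization argument is the intended deduction. One small caveat on your framing: the underlying result of \cite{FoUr-17} packaged in \cite{FoGaGi-16} is a Poincar\'e--Birkhoff theorem for \emph{Hamiltonian flows} with a twist condition on the frequency map, not for abstract exact symplectic diffeomorphisms (for which no such higher-dimensional theorem holds in general), but since you ultimately apply the corollary to the Hamiltonian system itself this does not affect the validity of your argument.
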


Condition \eqref{vicino} means that the function $(I(t),\tilde\varphi(t))$, which is a solution of the lifting of system \eqref{hs} to the covering space $\mathcal{D} \times \mathbb{R}^N$, remains arbitrarily close, in the $\mathcal{C}^0$-norm,
to the function $(\bar I, \tilde \varphi(0) + t \nabla \mathcal{K}(\bar I))$, solving the lifting of the unperturbed system \eqref{hs0}. By considering the distance in $\mathbb{T}^N$ given by
\begin{equation*}
\mathrm{d}_{\mathbb{T}^N}(\varphi,\psi) = \min_{k \in \{-1,0,1\}^N} | \varphi - \psi + 2k\pi |,
\end{equation*}
it is easy to see that, whenever $\sigma < 2\pi$, \eqref{vicino} implies that
\begin{equation}\label{vicinotoro}
\max_{t \in [0,T]} \mathrm{d}_{\mathbb{T}^N}(\varphi(t),\Pi(\tilde\varphi(0) + t \nabla \mathcal{K}(\bar I))) + \max_{t \in [0,T]} |I(t) - \bar I | \leq \sigma,
\end{equation}
that is, $(I(t),\varphi(t))$ remains close, in the $\mathcal{C}^0$-norm on $\mathbb{R}^2 \times \mathbb{T}^2$, to a solution 
of the unperturbed system \eqref{hs0}. This remark will be crucial in the next section.

\subsection{The main result: statement and proof}\label{section-3.2}

In this section, we state and prove our main result dealing with the existence of $T$-periodic solutions for the perturbed relativistic Kepler problem
\begin{equation}\label{eq-pert}
\dfrac{\mathrm{d}}{\mathrm{d}t}\left(\dfrac{m\dot{x}}{\sqrt{1-|\dot{x}|^2/c^2}}\right)=-\alpha\, \dfrac{x}{|x|^3}+\varepsilon \, \nabla_x U(t,x), \qquad
x \in \mathbb{R}^2 \setminus \{0\},
\end{equation}
where $m,c, \alpha$ are fixed positive constants and $\varepsilon > 0$ is a (small) parameter.
As for the perturbation function $U = U(t,x)$, we assume that $U\colon \mathbb{R} \times (\mathbb{R}^2 \setminus \{0\}) \to \mathbb{R}$ is continuous
in $(t,x)$, continuously differentiable in $x$ and $T$-periodic in $t$: henceforth, we denote this class of functions by $\mathcal{C}_T^{0,1}(\mathbb{R} \times (\mathbb{R}^2 \setminus \{0\}))$.

To state our result, we further need to introduce some constants.
As a first step, we define, for every integer $n \geq 1$, the value
\begin{equation*}
T^*_n = \dfrac{2\pi n \alpha}{mc^3}.
\end{equation*}
Notice that $T^*_1$ is nothing but the number appearing in condition \eqref{bound-T}: it corresponds to the limit value 
of the function $T_h$ given in \eqref{eq-periodor} when $h \to 0^+$.

Then, we fix $T > T^*_1$ and we define, for every integer $n \geq 1$ such that
$T > T^*_n$, the integer $k^*_{T,n} \geq n$ as follows.
Let $h_{T,n} \in (0,mc^2)$ be the unique value such that 
\begin{equation*}
n T_{h_{T,n}} = T,
\end{equation*}
notice that the existence and the uniqueness of $h_{T,n}$ follows from the facts that $T > T^*_n$ and that $T_h$ is a strictly increasing function (with respect to $h$) 
with $T_h \to +\infty$ for $h \to (mc^2)^-$.
Then, we define $k^*_{T,n}$ as the smallest integer number such that
\begin{equation*}
k^*_{T,n} > \frac{mc^2 n}{h_{T,n}}.
\end{equation*}
Notice that $k^*_{T,n} > n$ so that, in particular, $k^*_{T,n} \geq 2$. 

Finally, for $k \geq k^*_{T,n}$ we set
\begin{equation}\label{eq-rstar}
r^*_{T,n,k} = \frac{\alpha n^2}{h_{T,n} k^2} \dfrac{1}{1-\dfrac{n^2}{k^2}}. 
\end{equation}
With these preliminaries, the following result holds true. 

\begin{theorem}\label{th-main} 
Let $T$ be such that $T > T^*_M$ for some integer $M \geq 1$. Then, for every integer $n$ with $1 \leq n \leq M$, for every integer
$k \geq k^*_{T,n}$, with $n$ and $k$ relatively prime, and for every $U \in \mathcal{C}_T^{0,1}(\mathbb{R} \times (\mathbb{R}^2 \setminus \{0\}))$, there exists $\varepsilon^* > 0$ such that, for every $\varepsilon \in (0,\varepsilon^*)$, equation \eqref{eq-pert} has at least six $T$-periodic solutions,
denoted by $x_{n,k,+}^{(i)}$ and $x_{n,k,-}^{(i)}$ for $i=1,2,3$, such that:
\begin{itemize}
\item[$(i)$] $|x_{n,k,+}^{(i)}|$ and $|x_{n,k,-}^{(i)}|$ assume the value $r^*_{T,n,k}$ exactly $2n$ times in the interval $[0,T)$, for $i=1,2,3$;
\item[$(ii)$] $x_{n,k,+}^{(i)}$ has winding number $k$ in the interval $[0,T)$ and $x_{n,k,-}^{(i)}$ has winding number $-k$ in the interval $[0,T)$, for $i=1,2,3$.
\end{itemize} 
\end{theorem}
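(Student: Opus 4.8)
The plan is to recast the perturbed equation \eqref{eq-pert} as a nearly integrable Hamiltonian system in the action-angle coordinates of Section~\ref{section-2.4}, and then to apply Theorem~\ref{fgg} with $N=2$ on the resonant torus selected by the pair $(n,k)$. First I would write \eqref{eq-pert} in Hamiltonian form with the (time-dependent) Hamiltonian $H_0 - \varepsilon U(t,x)$, and push it forward through the symplectic maps $\Psi$ and $\Sigma$ to the chart $\Omega_0 \to \mathcal{D}\times\mathbb{T}^2$, obtaining a system of the form \eqref{hs} with $\mathcal{K}=\mathcal{K}_0$ as in \eqref{eq-hamactang} and $\mathcal{R}(t,\varphi,I) = -U(t,x(I,\varphi))$. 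Since $\Psi,\Sigma$ are smooth and $U \in \mathcal{C}_T^{0,1}$, the chain rule shows that $\mathcal{R}$ is continuous, $T$-periodic in $t$ and $\mathcal{C}^1$ in $(I,\varphi)$, so the hypotheses of Theorem~\ref{fgg} on the perturbation are met. For the given $(n,k)$ I would take $\bar I = (I_1(h_{T,n},L_{n,k}),\,L_{n,k})$, where $h_{T,n}$ is defined by $nT_{h_{T,n}}=T$ and $L_{n,k}>0$ is the unique value with $\sqrt{1-\alpha^2/(c^2 L_{n,k}^2)}=n/k$; the inequality $k\geq k^*_{T,n}$ is precisely what guarantees \eqref{eq-necperiodiche}, so that $\bar I\in\mathcal{D}$.

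The second step is to check the resonance condition \eqref{toro} together with its minimality. Decomposing the cycle $\gamma_1$ of Section~\ref{section-2.4} as ``flow for $T_h$'' followed by ``decrease $\vartheta$ by $\beta=\Delta\vartheta-2\pi$'', and using $\oint_{\gamma_j}\mathrm{d}\varphi_i=2\pi\delta_{ij}$, one finds that along the Hamiltonian flow $\varphi_1$ increases by $2\pi$ and $\varphi_2$ by $\Delta\vartheta-2\pi$ during each radial period $T_h$, with $\Delta\vartheta$ as in \eqref{eq-deltatheta0}. Over the full period $T=nT_h$ this yields
\begin{equation*}
T\nabla\mathcal{K}_0(\bar I) = \bigl(2\pi n,\; 2\pi(k-n)\bigr),
\end{equation*}
using $n\Delta\vartheta=2\pi k$ from \eqref{eq-commensurabile}; hence \eqref{toro} holds. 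Moreover, the smallest positive $T'$ with $T'\nabla\mathcal{K}_0(\bar I)\in 2\pi\mathbb{Z}^2$ equals $T/\gcd(n,k-n)=T/\gcd(n,k)=T$, because $n$ and $k$ are coprime, so $T$ is minimal as required.

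The heart of the matter, which I expect to be the main obstacle, is the non-degeneracy condition \eqref{hessiano}; a brute-force Hessian of \eqref{eq-hamactang} is unwieldy, and the decisive observation is that $\mathcal{K}_0$ depends on the actions only through the single combination
\begin{equation*}
s = (I_1-I_2) + \sqrt{I_2^2 - \tfrac{\alpha^2}{c^2}},
\end{equation*}
namely $\mathcal{K}_0 = f(s)$ with $f(s) = mc^2 s/\sqrt{s^2+\alpha^2/c^2}$, as one sees by rewriting the radicand in the denominator of \eqref{eq-hamactang} as $(u+w)^2+\alpha^2/c^2$ with $u=I_1-I_2$, $w=\sqrt{I_2^2-\alpha^2/c^2}$. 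Writing $s_i=\partial_{I_i}s$ and $s_{ij}=\partial_{I_i}\partial_{I_j}s$, one has $s_1=1$, $s_{11}=s_{12}=0$, and $s_{22}=-(\alpha^2/c^2)(I_2^2-\alpha^2/c^2)^{-3/2}\neq 0$; since the rank-one term $f''(s)\,s_i s_j$ contributes nothing to the determinant, a short computation gives
\begin{equation*}
\det\nabla^2\mathcal{K}_0(\bar I) = f'(s)\,f''(s)\,s_{22}.
\end{equation*}
Here $f'(s)=mc^2(\alpha^2/c^2)(s^2+\alpha^2/c^2)^{-3/2}>0$, while $h=f(s)\in(0,mc^2)$ forces $s>0$ and hence $f''(s)=-3mc^2(\alpha^2/c^2)\,s\,(s^2+\alpha^2/c^2)^{-5/2}\neq 0$, so the determinant is nonzero. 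The point is that, although $\mathcal{K}_0$ is a function of one variable $s$, the nonlinear dependence of $s$ on $I_2$ (encoded by $s_{22}\neq 0$) prevents the Hessian from being singular; it is worth noting that in the non-relativistic limit $\mathcal{K}_0$ collapses to a function of $I_1$ alone (Remark~\ref{rem-rlimite}), whose Hessian is degenerate, so that the non-degeneracy is a genuinely relativistic effect.

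With \eqref{toro} and \eqref{hessiano} established, Theorem~\ref{fgg} provides, for $\varepsilon$ small, at least $N+1=3$ $T$-periodic solutions of \eqref{hs} satisfying \eqref{vicinotoro}; transporting them back through $\Sigma^{-1}$ and then $\Psi$ gives three $T$-periodic solutions $x_{n,k,+}^{(i)}$ of \eqref{eq-pert}, each $\mathcal{C}^0$-close to an unperturbed periodic solution on the torus $\Omega_0(h_{T,n},L_{n,k})$. Repeating the construction with $L_{n,k}<0$ (for which the action-angle picture and $\mathcal{K}_0$ are unchanged, the unperturbed problem being invariant under reflection) yields three further solutions $x_{n,k,-}^{(i)}$ with the opposite orientation, for six in total. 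Finally I would read off properties $(i)$ and $(ii)$ from \eqref{vicinotoro}: every unperturbed solution on the torus shares the same radial profile, which crosses $r^*_{T,n,k}$ transversally exactly $2n$ times per period (Proposition~\ref{teo-periodox}$(i)$, the value \eqref{eq-rstar} being exactly the $r^*$ of this torus), and by continuous dependence the perturbed solutions converge in $\mathcal{C}^1$ on $[0,T]$ to unperturbed ones as $\varepsilon\to 0$, so the transversal crossings persist and the count $2n$ is preserved for $\varepsilon$ small; the winding number $\pm k$ is likewise preserved because $|x|$ stays bounded away from the origin and the winding number is a homotopy invariant.
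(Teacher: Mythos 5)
Your proposal is correct and takes essentially the same route as the paper's proof: pass to the Hamiltonian formulation, use the action-angle chart $\Sigma\circ\Psi$ of Section~\ref{section-2.4}, verify \eqref{toro} and \eqref{hessiano} at $\bar I=(I_1(h_{T,n},L_{n,k}),L_{n,k})$, apply Theorem~\ref{fgg} for $L=\pm L_{n,k}$ to get six solutions, and transport $\mathcal{C}^0$- (hence $\mathcal{C}^1$-) closeness back to the $x$-variables to obtain $(i)$ and $(ii)$. The only differences are presentational: your factorization $\mathcal{K}_0=f(s)$ with its rank-one-plus-chain-rule Hessian (yielding $\det\nabla^2\mathcal{K}_0=f'(s)f''(s)s_{22}\neq 0$) is exactly the structure the paper writes out entrywise via $S(I)$, and your explicit frequency computation $T\nabla\mathcal{K}_0(\bar I)=(2\pi n,2\pi(k-n))$ with the coprimality argument for minimality replaces the paper's shorter appeal to Proposition~\ref{teo-periodox}, which gives that the torus is filled by solutions of minimal period $nT_{h_{T,n}}=T$.
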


Notice that Theorem~\ref{th-intro} follows by taking $n=1$ in the above statement.

\begin{proof} 
Let us write the perturbed problem \eqref{eq-pert} using the Hamiltonian formalism; according to Section~\ref{subsec-hamilton}, this can be achieved by defining the perturbed Hamiltonian
\begin{equation*}
H_\varepsilon(x,p) = mc^2\sqrt{1+\dfrac{|p|^2}{m^2c^2}}-\dfrac{\alpha}{|x|} - \varepsilon \, U(t,x).
\end{equation*}
Then, we pass to action-angle coordinates, by performing the change of variables
\begin{equation*}
\Sigma \circ \Psi \colon \Lambda_0 \to \mathcal{D} \times \mathbb{T}^2, \qquad (x,p) \mapsto (I_1,I_2,\varphi_1,\varphi_2),
\end{equation*}
described in Section~\ref{section-2.4}. In the new variables, system \eqref{eq-pert} then takes the form
\eqref{hs}, with $N=2$, $I = (I_1,I_2)$, $\varphi = (\varphi_1,\varphi_2)$, $\mathcal{K}(I) = \mathcal{K}_0(I)$ as defined in \eqref{eq-hamactang}
and 
\begin{equation*}
\mathcal{R}(t,I,\varphi) = U(t,x(I,\varphi))
\end{equation*}
where $x(I,\varphi)$ are the first two components of the four-dimensional vector $(\Sigma \circ \Psi)^{-1}(I,\varphi)$.

Our aim is to apply Theorem~\ref{fgg}: we thus need to check that conditions \eqref{toro} and \eqref{hessiano} hold true,
for a suitable $\bar I \in \mathcal{D}$.

To this end, we first observe that, given $n$ and $k$ as in the statement of the theorem, 
the unperturbed problem \eqref{eq-keplero} has periodic solutions with minimal period equal to $T$. Indeed,
let
\begin{equation*}
L_{n,k} = \dfrac{\alpha}{c} \sqrt{\dfrac{1}{1-\dfrac{n^2}{k^2}}}.
\end{equation*}
Then, we easily see that for $h = h_{T,n}$ and $L = L_{n,k}$ conditions \eqref{eq-necperiodiche} and \eqref{eq-commensurabile} are satisfied, so that Proposition~\ref{teo-periodox} applies.
By writing the corresponding action $(\bar I_1, \bar I_2)$ as in \eqref{eq-defi1} and \eqref{eq-defi2}, condition \eqref{toro} plainly follows
(recall that \eqref{toro} means that the corresponding torus is made up by $T$-periodic solutions).

To check \eqref{hessiano}, we compute
\begin{equation*}
\nabla^2 \mathcal{K}_0(I) = 
\dfrac{\alpha^2}{c^2}
\begin{pmatrix}
\dfrac{-3S(I)}{\biggl{(}S(I)^2 + \dfrac{\alpha^2}{c^2}\biggr{)}^{\!\!\frac{5}{2}}} & \dfrac{-3S(I) \dfrac{\partial S(I)}{\partial I_{2}}}{\biggl{(}S(I)^2 + \dfrac{\alpha^2}{c^2}\biggr{)}^{\!\!\frac{5}{2}}} \vspace{7pt} \\
\dfrac{-3S(I)\dfrac{\partial S(I)}{\partial I_{2}}}{\biggl{(}S(I)^2 + \dfrac{\alpha^2}{c^2}\biggr{)}^{\!\!\frac{5}{2}}} & \dfrac{-3S(I)\biggl{(}\dfrac{\partial S(I)}{\partial I_{2}}\biggr{)}^{\!\!^2} + \biggl{(}S(I)^2 + \dfrac{\alpha^2}{c^2}\biggr{)} \dfrac{\partial^{2} S(I)}{\partial I_{2}^{2}} }{\biggl{(}S(I)^2 + \dfrac{\alpha^2}{c^2}\biggr{)}^{\!\!\frac{5}{2}}}
\end{pmatrix},
\end{equation*}
where
\begin{equation*}
S(I) = I_1 - I_2 + \frac{1}{c} \sqrt{c^2 I_2^2 - \alpha^2}.
\end{equation*}
Notice that, by \eqref{eq-defi1} and \eqref{eq-defi2}, $I_1 - I_2 > 0$ and then $S(I) > 0$. As a consequence, 
\begin{equation*}
\det \nabla^2 \mathcal{K}_0(I) = - \frac{\alpha^4}{c^4} \dfrac{3 S(I) \dfrac{\partial^{2} S(I)}{\partial I_{2}^{2}}}{\biggl{(}S(I)^2 +\dfrac{\alpha^2}{c^2}\biggr{)}^{\!\!4}}
= \dfrac{\alpha^6}{c^3}\frac{3 S(I)}{\biggl{(}S(I)^2 +\dfrac{\alpha^2}{c^2}\biggr{)}^{\!\!4} \bigl{(}c^2 I_2^2 - \alpha^2\bigr{)}^{\!\frac{3}{2}}} \neq 0,
\end{equation*}
thus \eqref{hessiano} holds true.

Therefore, Theorem~\ref{fgg} applies so that, for every $\sigma > 0$, three $T$-periodic solutions of \eqref{hs} exist, for $\varepsilon$ small enough.
Going back to the original variables, we have thus found three $T$-periodic solutions of system \eqref{eq-pert}. 
Other three $T$-periodic solutions can be obtained, by repeating the above arguments with $L = - L_{n,k}$. Notice that, 
up to taking smaller values of $\sigma$ (and thus of $\varepsilon$) all the six solutions remain distinct, since 
\eqref{vicinotoro} ensures that they stay close to solutions on distinct tori of the unperturbed problem. To conclude the proof, we thus need to show that, again up to taking smaller values of $\sigma$, conditions $(i)$ and $(ii)$ of the statement are satisfied.

As for $(ii)$, we notice that from \eqref{vicinotoro} it follows that
\begin{equation}\label{vicino1}
\max_{t \in [0,T]} | x(t) - x_0(t) | + \max_{t \in [0,T]} |p(t) - p_0(t) | \leq C(\sigma),
\end{equation}
with $(x_0,p_0)$ a $T$-periodic solution of \eqref{eq-keplero} with $h = h_{T,n}$ and $L = \pm L_{n,k}$, and $C(\sigma)$ a suitable constant with $C(\sigma) \to 0$ as $\sigma \to 0^+$. 
Since, by Proposition~\ref{teo-periodox}, $x_0$ has winding number $\pm k$ on the interval $[0,T)$ according to $L = \pm L_{n,k}$,
and the winding number is continuous in the $\mathcal{C}^0$-topology, condition $(ii)$ follows.

To check that $(i)$ holds as well, we first claim that $x$ remains close to $x_0$ also in the $\mathcal{C}^1$-norm.
To see this, we use \eqref{eq-hs} and its perturbed version to infer that
\begin{equation}\label{vicino2}
\dot x(t) - \dot x_0(t) = \dfrac{p(t)}{m \sqrt{1 + \dfrac{| p(t) |^2}{m^2 c^2}}} - \frac{p_0(t)}{m \sqrt{1 + \dfrac{| p_0(t) |^2}{m^2 c^2}}}, \quad \text{for every $t \in [0,T]$.}
\end{equation}
An easy computation shows that the Jacobian matrix of the function
\begin{equation*}
\mathbb{R}^2 \ni p \mapsto \dfrac{p}{m \sqrt{1 + \dfrac{| p|^2}{m^2 c^2}}}
\end{equation*}
is globally bounded; then, using the mean-value theorem, from \eqref{vicino2} we infer that 
\begin{equation*}
| \dot x(t) - \dot x_0(t) | \leq \Gamma \, | p(t) - p_0(t) |, \quad \text{for every $t\in[0,T]$,}
\end{equation*}
for a suitable constant $\Gamma > 0$, and thus, by \eqref{vicino1},
\begin{equation*}
\max_{t \in [0,T]} | \dot x(t) - \dot x_0(t) | \leq \Gamma \, C(\sigma).
\end{equation*}
Writing $x = r e^{i\vartheta}$ as in \eqref{eq-cambiovar2}, we have
\begin{equation*}
\dot r = \frac{\langle x,\dot x \rangle}{r}
\end{equation*}
and we can thus deduce that $r$ is close to $r_0$ in the $\mathcal{C}^1$-topology. 
Once this is known, we can conclude by showing that $r_0$ assumes exactly $2n$ times in the interval $[0,T)$ the value
$r^*_{T,n,k}$, always with non-zero derivative.
This follows from $(i)$ of Proposition~\ref{teo-periodox}, after checking that the definition of $r^*$ in \eqref{eq-rstarbasso} for $h = h_{T,n}$
and $L = L_{n,k}$ coincides with the one of the value $r^*_{T,n,k}$ given in \eqref{eq-rstar}.
\end{proof}

\begin{remark}[Abundance of $T$-periodic solutions]
According to \cite[Corollary~2.2]{FoGaGi-16}, it would be possible to bifurcate also from tori filled by periodic solutions with minimal period $T/\ell$, for some integer $\ell \geq 2$. As a consequence, on growing of the period $T$, even more $T$-periodic solutions to equation \eqref{eq-pert} could be provided, leading to an improved version of Theorem~\ref{th-intro}. For briefness, we prefer not to go into the details of this. 
\end{remark}

\begin{remark}[Applying KAM theory]\label{rem-kam}
Since the non-degeneracy condition \eqref{hessiano} has been proved to hold
for the unperturbed problem \eqref{eq-keplero} when written in action-angle coordinates, a standard KAM theorem (see, for instance, \cite[Chapter~4.5]{Du-14}) can be applied
to ensure the survival of quasi-periodic invariant tori for a perturbed problem of the type
\begin{equation*}
\dfrac{\mathrm{d}}{\mathrm{d}t}\left(\dfrac{m\dot{x}}{\sqrt{1-|\dot{x}|^2/c^2}}\right)=-\alpha\, \dfrac{x}{|x|^3}+\varepsilon \, \nabla U(x),
\end{equation*}
with $U$ sufficiently regular and $\varepsilon$ sufficiently small. We stress that here the perturbation term
$U$ is required to depend only on $x$, and not on time $t$.
We are not aware of KAM theorems allowing to deal with a time-periodic perturbation of the relativistic Kepler problem, as in \eqref{eq-pert}  
(compare for instance with \cite[Theorem~1]{GuKa-PP}, requiring however the Hessian of the Hamiltonian to be strictly convex, which is not the case in our setting).
\end{remark}

\bibliographystyle{elsart-num-sort}
\bibliography{BoDaFe-biblio}

\end{document}